\definecolor{red}{rgb}{1,0,0}
\definecolor{blue}{rgb}{.2,.2,.8}
\definecolor{magenta}{rgb}{1,0,1}
\definecolor{dartmouthgreen}{rgb}{0.05, 0.5, 0.06}
\definecolor{purple(x11)}{rgb}{0.63,0.36,0.94}
\definecolor{turquoise}{rgb}{0.25, 0.87, 0.81}
\def\B{\mathcal B}
\def\A{\mathcal A}
\newcommand{\rank}{\operatorname{rank}}
\newtheorem{theorem}{Theorem}[section]
\newtheorem{corollary}[theorem]{Corollary}
\newtheorem{proposition}[theorem]{Proposition}
\newtheorem{lemma}[theorem]{Lemma}
\theoremstyle{definition}
\newtheorem{example}{Example}
\newtheorem{remark}{Remark}
\begin{document}

\title{Mock theta functions and related combinatorics}
\author{Cristina Ballantine, Hannah Burson, Amanda Folsom, \\ Chi-Yun Hsu, Isabella Negrini, and Boya Wen}

\address{Department of Mathematics and Computer Science, College of the Holy Cross, Worcester, MA 01610 USA} 
\email{cballant@holycross.edu}    
 
\address{School of Mathematics, University of Minnesota, Twin Cities, 
127 Vincent Hall 206 Church St. SE, Minneapolis, MN 55455, USA}  
\email{hburson@umn.edu}

\address{Department of Mathematics and Statistics, Amherst College, Amherst, MA 01002, USA}  
\email{afolsom@amherst.edu}  
 
\address{Department of Mathematics, University of California, Los Angeles, Math Sciences Building, 520 Portola Plaza, Box 951555, 
Los Angeles, CA 90095, USA}  
\email{cyhsu@math.ucla.edu}
\curraddr{Laboratoire Paul Painlev\'e, Universit\'e de Lille, 59000 Lille, France}
\email{chiyun.hsu@univ-lille.fr}

\address{Department of Mathematics and Statistics, McGill University, 805 Sherbrooke ST west, Montreal, Quebec H3A 2K6, Canada} 
\email{isabella.negrini@mail.mcgill.ca}
\curraddr{Simons Laufer Mathematical Sciences Institute (formerly MSRI), 17 Gauss Way,
Berkeley, California, 94720-5070, USA}

\address{Department of Mathematics, University of Wisconsin-Madison, 480 Lincoln Drive, Madison, WI 53706, USA} 
\email{bwen25@wisc.edu}
\curraddr{Simons Laufer Mathematical Sciences Institute (formerly MSRI), 17 Gauss Way,
Berkeley, California, 94720-5070, USA}

	\maketitle
	
\begin{abstract}
In this paper we add to the literature on the  combinatorial nature  of the mock theta functions, a collection of curious  $q$-hypergeometric series introduced by Ramanujan in his last letter to Hardy in 1920, which we now know to be important examples of mock modular forms.    Our work is inspired  by Beck's conjecture, now a theorem of Andrews, related to Euler's identity:  the excess of the number of parts in all partitions of $n$ into odd parts over the number of partitions of $n$ into distinct parts is equal to the number of partitions with only one (possibly repeated) even part and all other parts odd.  
We establish Beck-type identities associated to partition identities due to Andrews, Dixit, and Yee for the third order mock theta functions $\omega(q), \nu(q)$, and $\phi(q)$.
Our proofs are both analytic and combinatorial in nature, and involve mock theta generating functions and combinatorial bijections.
\end{abstract}

	\section{Introduction} 
\label{sec_mockintro}
\subsection*{Mock theta functions}
In Ramanujan's last letter to Hardy from 1920, he presented his \emph{mock theta functions,} a collection of 17 curious $q$-hypergeometric series including 
\begin{align*}
    \omega(q) &:= \sum_{k=0}^\infty \frac{q^{2k(k+1)}}{(q;q^2)^2_{k+1}}, \\
    \nu(q) &:= \sum_{k=0}^\infty \frac{q^{k(k+1)}}{(-q;q^2)_{k+1}},\\
    \phi(q) &:= \sum_{k=0}^\infty \frac{q^{k^2}}{(-q^2;q^2)_k},
\end{align*}
of the \emph{third order}.   Here and throughout, the $q$-Pochhammer symbol is defined for $n\in \mathbb N_0 \cup\{\infty\}$ by   
\begin{align*}
 (a;q)_n := \prod_{j=0}^{n-1}(1-a q^j) = (1-a)(1-aq)(1-aq^2) \cdots (1-aq^{n-1})
\end{align*}  and we assume $|q|<1$, so that all series converge absolutely.
Ramanujan didn't define what he meant by the order of a mock theta function, nor did he precisely define a mock theta function. However, we have since been able to extract a definition  from his own writing \cite{BerndtRankin} (see also the recent works \cite{GRO, Rhoadesmock}):\smallskip
\begin{quote}\emph{``Suppose there is a function in the Eulerian form and suppose that all or an infinity of points $q=e^{2i \pi m/n}$ are exponential
singularities and also suppose that at these points the asymptotic form of the function closes neatly\dots The question is: is the function taken the sum of two functions one of which is an ordinary theta function and the other
a (trivial) function which is $O(1)$ at all the points $e^{2 i \pi m/n}$? The answer is it is not necessarily so.
When it is not so I call the function Mock $\theta$-function. I have not proved rigorously that it is not necessarily so. But I have constructed
a number of examples...''}
\end{quote}
Ramanujan's reference to theta functions,   a class of modular forms, and  Eulerian forms, which are $q$-series expressible in terms of   \emph{$q$-hypergeometric} series (\cite{Fine, GasperRahman}) and similar in  shape  to $\omega(q), \nu(q),$ and $\phi(q)$, indirectly points back to earlier examples of Eulerian modular forms.  For example, Dedekind's $\eta$-function is an important  modular theta function of weight $1/2$ which can be expressed in terms of a $q$-hypergeometric series as follows:
\begin{align} \label{def_etahyp} q^{\frac{1}{24}}\eta^{-1}(\tau) = \sum_{n=0}^\infty \frac{q^{n^2}}{(q;q)_n^2},\end{align} {where} $q=e^{2\pi i \tau}$ {is} the usual modular variable, with $\tau$ in the upper half complex plane.  Ramanujan's letter on his mock theta functions claimed that mock theta functions behave like (weakly holomorphic) modular forms near roots of unity but are not themselves modular, hence the adjective \emph{mock}.   

The precise roles played by the mock theta functions within the theory of modular forms remained unclear in the decades following Ramanujan's death shortly after he wrote his last letter to Hardy.  However, the  importance of these functions was clear --  they have been shown to play meaningful roles in the diverse subjects of combinatorics, $q$-hypergeometric series,   mathematical physics,   elliptic curves and traces of singular moduli,    Moonshine and representation theory, and more.   Within the last 20 years we have also finally understood,  thanks to key work by Zwegers, Bruinier--Funke, and others including Bringmann--Ono and Zagier \cite{BFOR}, that the mock theta functions turn out to be examples of \emph{mock modular forms}, which are holomorphic parts of \emph{harmonic Maass forms}, modern relatives to ordinary Maass forms and modular forms.  This context has also allowed us to make more sense of the notion of the order of a mock theta function. For more background and information on these aspects of the mock theta functions, see, e.g., \cite{BFOR, DukeMock, FPerspectives, ZagierBBQ}.  

Turning to the first application of mock theta functions mentioned above, combinatorics, we recall that Dedekind's modular $\eta$-function may also be viewed as the reciprocal of the generating function for integer partitions.  That is, \eqref{def_etahyp} may also be written as 
\begin{equation}
\label{eqn:part}
    \prod_{n=1}^\infty \frac{1}{1-q^n} = \sum_{n=0}^\infty p(n) q^n = 1+q+2q^2+3q^3 + 5q^4 + 7q^5 + \cdots,
\end{equation}
 where $p(n)$ is the number of partitions of $n$.   That \eqref{eqn:part} is simultaneously a modular form and a combinatorial generating function has led to some deep and important results and theory.  Namely,    Hardy--Ramanujan  introduced their famous \emph{Circle Method} in analytic number theory, which combined with the modularity of Dedekind's   $\eta$-function, led to the following {exact} formula for the partition numbers \cite{Rad} 
$$
p(n)= 2 \pi (24n-1)^{-\frac{3}{4}} \sum_{k =1}^{\infty}
\frac{A_k(n)}{k}  I_{\frac{3}{2}}\left( \frac{\pi
\sqrt{24n-1}}{6k}\right),
$$
an infinite sum in terms of Kloosterman sums $A_k$ and Bessel functions $I_{s}$.

Like the modular $\eta$-function,  the mock theta functions may also be viewed as combinatorial generating functions.  For example, we have 
\begin{align*}
  q\omega(q) = \sum_{n=1}^\infty  a_\omega(n) q^n, \ \ \ \ \ \ \ 
  \nu(-q) = \sum_{n=0}^\infty a_\nu(n) q^n, \ \ \ \ \ \ \ 
  \phi(q) = \sum_{n=0}^\infty a_\phi(n) q^n, 
\end{align*}
where  $a_\omega(n)$
counts the number of partitions of $n$ whose parts, except for one instance of the largest part, form pairs of consecutive non-negative integers \cite[(26.84)]{Fine}; $a_\nu(n)$ counts the number of partitions of $n$ whose even parts are distinct, and if $m$ occurs as a part, then so does every positive even number less than $m$; and
$a_\phi(n) :=  sc_e(n) - sc_o(n),$ where $sc_{o/e}(n)$ counts the number of self-conjugate partitions $\lambda$ of $n$ with $L(\lambda)$ odd/even.  Here, $L(\lambda)$ is the number of parts of $\lambda$ minus the side length of its Durfee square.  {(See, e.g., \cite{AndrewsEncy} and Section~\ref{sec:notation} for more background on integer partitions.)}  

Using the newer theory of {mock modular forms}, we have results analogous to the celebrated Hardy--Ramanujan--Rademacher exact formula for $p(n)$; for example, due to Garthwaite \cite{Garthwaite} we have 
$$ a_\omega(n) =  \frac{\pi}{2\sqrt{2}}(3n+2)^{-\frac14} \mathop{\sum_{k=1}^\infty}_{(k,2)=1} \frac{(-1)^{\frac{k-1}{2}} A_k(\frac{n(k+1)}{2}-\frac{3(k^2-1)}{8})}{k}I_{\frac12}\left(\frac{\pi \sqrt{3n+2}}{3k}\right).$$
Numerous other papers, some of which we discuss in the sections that follow, have established further meaningful combinatorial results pertaining to the mock theta functions, including congruence properties, asymptotic properties, and more, adding to   broader and older theories which rest at the  intersection of  combinatorics and  modular forms.  

\subsection*{Beck-type partition identities}
In this paper we seek to add to the growing literature on understanding the combinatorial nature of the mock theta functions. More specifically, we study the total number of parts in certain sets of  partitions of $n$ related to  the third order mock theta functions $\omega(q), \nu(q),$ and $\phi(q)$.    In general, identities on the number of parts in all partitions of $n$ of a certain type have  been of  interest in the literature, dating back to work of Beck and Andrews.   Their work was motivated by Euler's famous partition identity, which states that for any positive integer $n$, $$p(n \ | \text{ odd parts}) = p(n \ | \ \text{distinct parts}),$$ and which may be immediately deduced from the identity 
$$\prod_{n=1}^\infty \frac{1}{1-q^{2n-1}} = \prod_{n=1}^\infty (1+q^n)$$ upon realizing that 
the ``modular" products appearing are generating functions for the partition functions in Euler's identity.  Here and throughout, we use the common notation
$p(n \ | \ \textit{conditions })$ to denote the number of partitions of $n$ subject to the given conditions.  For example, $p(n \ | \ \text{odd parts})$ equals the number of partitions of $n$ with odd parts.

While {the natural number-of-parts refinement} of Euler's identity is not true, namely  the number of partitions  of $n$ into exactly $m$ odd parts is not in general equal to the number of partitions   of $n$ into exactly $m$ distinct parts,   Beck conjectured and Andrews proved \cite{A17}   that the excess in the number of parts in all partitions of $n$ into odd parts over {the number of parts in} all partitions of $n$ into distinct parts is equal to the number of partitions with only one (possibly repeated) even part and all other parts odd. Andrews additionally showed that this excess is also equal to the number of partitions of $n$ with only one repeated part and all other parts distinct.   Andrews provided an analytic proof of this theorem using generating functions, and   Yang \cite{Yang19} and Ballantine--Bielak \cite{BB19} later  independently provided combinatorial proofs. 

Since Beck made the first conjecture of this type, combinatorial identities on the excess between the number of parts in all partitions of $n$ arising from a partition identity like Euler's are now fairly commonly referred to as  ``Beck-type identities." In the recent past, a number of other interesting
 Beck-type  companions to other important identities have been established -- see, e.g., \cite{AB19}, \cite{WIN5Lehmer},  \cite{BW21}, \cite{LW20}, \cite{Yang19}.  
 
 Here, we establish Beck-type identities associated to the third order mock theta functions $\omega(q), \nu(q),$ and $\phi(q)$ in Theorem \ref{T1}, Theorem \ref{thm:nu_partition}, and Theorem \ref{thm_phimain}, respectively. 
Our results may be viewed as Beck-type companion identities to partition identities for the third order mock theta functions $\omega(q), \nu(q),$ and $\phi(q)$ due to Andrews, Dixit and Yee in \cite{ADY}.   We devote Section \ref{sec:notation} to preliminaries on partitions, and state and prove our main results on $\omega(q), \nu(q),$ and $\phi(q)$ in Section \ref{BeckIdentity}, Section \ref{sec_nusection}, and Section \ref{sec_phi}, respectively. {As a Corollary to our main results, we also establish mock theta pentagonal-number-theorem-type results in Theorem \ref{pent} and Corollary \ref{cor_pent}.}   Generally speaking, our proofs are both analytic and combinatorial in nature, and involve mock theta generating functions and combinatorial bijections.

\section{Preliminaries on partitions} \label{sec:notation} 
Let $n\in\mathbb N_0$. A \emph{partition} of $n$, denoted  $\lambda=(\lambda_1, \lambda_2, \ldots, \lambda_j)$,     is a non-increasing sequence of positive integers $\lambda_1\geq \lambda_2 \geq \cdots \geq \lambda_j$ called \emph{parts} that add up to $n$. We refer to $n$ as the \emph{size} of $\lambda$.  The \emph{length} of $\lambda$ is the number of parts of $\lambda$, denoted by $\ell(\lambda)$.  We denote by $\ell_o(\lambda)$ and  $\ell_e(\lambda)$ the number of odd, respectively even parts of $\lambda$.  For convenience, we abuse notation and use $\lambda$ to denote either the multiset of its parts or the non-increasing sequence of parts.  We write $a\in \lambda$ to mean the positive integer $a$ is a part of $\lambda$. As mentioned in the introduction, we denote by $p(n)$ the number of partitions of $n$. The empty partition is the only partition of size $0$, thus, $p(0)=1$. We write $|\lambda|$ for the size of $\lambda$ and  $\lambda\vdash n$  to mean that $\lambda$ is a partition of size $n$. For a pair of partitions $(\lambda, \mu)$ we also write $(\lambda, \mu)\vdash n$ to mean $|\lambda|+|\mu|=n$. 
We use the convention that $\lambda_k=0$ for all $k>\ell(\lambda)$.  When convenient we will also use the exponential notation for parts in a partition:  the exponent of a part is the multiplicity of the part in the partition.  This notation will be used mostly for rectangular partitions.  We write $(a^b)$ for the partition consisting of $b$ parts equal to $a$. 
Further, we denote by calligraphy style capital letters the set of partitions enumerated by the function denoted by the same letter. For example, we denote by $q_o(n)$ the number of partitions of $n$ into distinct odd parts and by $\mathcal Q_o(n)$ the set of partitions of $n$ into distinct odd parts. {Moreover, when the size of the partitions is not explicitly stated in the notation of a set, we mean the set of all partitions with the properties implied by the notation. For example, $\mathcal Q_o=\bigcup_{n\geq 0}\mathcal Q_o(n)$.}

The \emph{Ferrers diagram} of a partition $\lambda=(\lambda_1, \lambda_2, \ldots, \lambda_j)$ is an array of left justified boxes such that the $i$th row from the top contains $\lambda_i$ boxes. In the literature, these are also referred to as \emph{Young diagrams}. We abuse notation and use $\lambda$ to mean a partition or its Ferrers diagram. The $2$-\emph{modular Ferrers diagram} of $\lambda$ is a Ferrers diagram in which row $i$ has $\lceil \frac{\lambda_i}{2}\rceil$ boxes, all but the first filled with $2$. The first box of row $i$ is filled with $2$, respectively $1$,  if $\lambda_i$ is even, respectively odd. \begin{example} The Ferrers diagram and the $2$-modular Ferrers diagram of  $\lambda=(5,4, 3, 3, 2,2)$ are shown in Figure \ref{fig:1}. \begin{figure}[h]
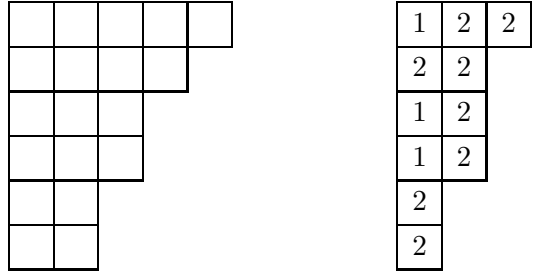
  $$\small\ydiagram{5, 4, 3, 3,2, 2} \ \ \ \ \ \ \ \ \ \ \ \ \ \ \ \ \ \small\begin{ytableau} 1& 2& 2\\ 2 & 2\\ 1& 2\\ 1& 2\\ 2\\ 2\end{ytableau}$$ 
\caption{A Ferrers diagram and 2-modular Ferrers diagram}
  \label{fig:1}
\end{figure} \end{example}

{Given a partition $\lambda$, its \emph{conjugate} $\lambda'$ is the partition for which the rows in its Ferrers diagram are precisely the columns in the Ferrers diagram of $\lambda$. For example, the conjugate of $\lambda=(5,4, 3, 3, 2,2)$ is $\lambda'=(6,6,4,2,1)$. A partition  is called \emph{self-conjugate} if it is equal to its conjugate.}

The Durfee square of a partition $\lambda$ is the largest square that fits inside the Ferrers diagram of $\lambda$, i.e., the partition $(a^a)$, where $a$ is such that  $\lambda_a\geq a$ and $\lambda_{a+1}\leq a$. For example, the Durfee square of $\lambda=(5,4, 3, 3, 2,2)$  is $(3^3)=(3,3,3)$.

For more details on partitions, we refer the reader to \cite{AndrewsEncy}.
 
An \emph{odd Ferrers diagram} $F$ is a Ferrers diagram  filled with $1$ and $2$ such that the first row is filled with $1$ and the remaining rows form the $2$-modular Ferrers diagram of a partition $\lambda$ with all parts odd. If the first row has length $k$, we identify the odd Ferrers diagram $F$ with  the pair $(k,\lambda)$. The \emph{size} of an odd Ferrers diagram ${F}$ is the sum of all entries in the boxes of diagram and is denoted by $|{F}|$. %The length of $F$ is the number of rows in the diagram. 

\begin{example}
\label{eg_odd_ferrers}
Figure \ref{fig:2} shows the odd Ferrers diagram of size $44$ with  $7$ rows corresponding to the pair $(k,\lambda)$ with $k=8$ and $\lambda=(11,7,7,5,5,1)$. \begin{figure}[h]
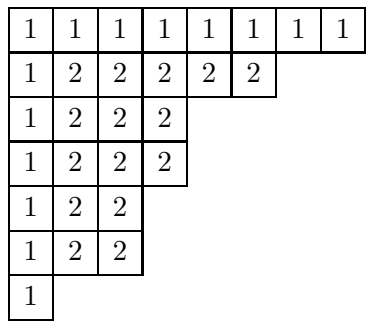

\begin{center}
\small \begin{ytableau}
1&1&1&1&1&1& 1&1\\
1&2&2&2&2&2\\
1&2&2&2\\
1&2&2&2\\
1&2&2\\
1&2&2\\
1
\end{ytableau}
\end{center} 
\caption{An odd Ferrers diagram}
  \label{fig:2}
\end{figure}
\end{example}

The rank of a partition $\lambda$, denoted  $r(\lambda)$, is defined as $r(\lambda)=\lambda_1-\ell(\lambda)$, or equivalently, the number of columns minus the number of rows in its Ferrers diagram.   In \cite{BG02}, the $M_2$-rank of a partition is defined as the number of columns minus the number of rows in its $2$-modular diagram. 
The rank of an odd Ferrers diagram $F=(k, \lambda)$, denoted $\rank (F)$, is defined as the number of columns minus the number of rows of $F$, or equivalently, $\rank(F) = k-\ell(\lambda)-1$. 

\section{The mock theta function $\omega$}\label{BeckIdentity}
{Recall from Section \ref{sec_mockintro} that} Ramanujan's third order mock theta function $\omega$ is defined by 
$$\omega(q):= \sum_{k=0}^\infty \frac{q^{2k(k+1)}}{(q;q^2)_{k+1}^2}.$$   It is known \cite[(26.84)]{Fine} that  
$$q\omega(q) = A_\omega(q) := \sum_{k=1}^\infty \frac{q^{k}}{(q;q^2)_{k}} = \sum_{n=1}^\infty a_\omega(n) q^n,$$ where $a_\omega(n)$ 
counts the number of partitions of $n$ whose parts, except for one instance of the largest part, form pairs of consecutive non-negative integers. {We are allowing pairs of consecutive integers to be $(0,1)$, but we are not considering $0$ as a part of the partition.}
There is also the (highly non-trivial) identity by Andrews--Dixit--Yee \cite{ADY}:  
$$q\omega(q) = B_\omega(q) := \sum_{k=1}^\infty \frac{q^k}{(q^k;q)_{k+1}(q^{2k+2};q^2)_\infty} = \sum_{n=1}^\infty b_\omega(n)q^n, $$ 
where $b_\omega(n)$ counts the number of partitions of $n$ such that all odd parts are less than twice the smallest part.  Hence, $a_\omega(n) = b_\omega(n)$. 

We define two variable generalizations of $A_\omega(q)$ and $B_\omega(q)$  as follows.
Let 
\begin{align}\label{def_Azq}A_\omega(z;q):=\sum_{k=1}^\infty \dfrac{zq^{k}}{(1-zq)(z^2q^3;q^2)_{k-1}}=\sum_{m=1}^\infty\sum_{n=1}^{\infty}a_{\omega}(m,n)z^mq^n,\end{align}
where $a_\omega(m,n)$ counts the number of partitions of $n$ with $m$ parts, which except for one instance of the largest part, form pairs of consecutive non-negative integers. {To see this, one can re-write the   denominator of the $k$th summand as
$$(1-z q^{0+1})(1-z^2 q^{1+2})(1-z^2 q^{2+3})\cdots (1-z^2 q^{(k-1)+k}),$$ and use the same convention as noted above for the combinatorial interpretation of $a_\omega(n)$ for which  $(0,1)$ is an allowed pair of consecutive non-negative integers but for which $0$ is not considered a part of the partition.} Let
\begin{align}\label{eqn_Bzq}
B_\omega(z;q):= \sum_{k=1}^\infty \frac{zq^k}{(zq^k;q)_{k+1}(zq^{2k+2};q^2)_\infty}=\sum_{m=1}^{\infty}\sum_{n=1}^{\infty}b_{\omega}(m,n)z^mq^n,
\end{align}
where $b_{\omega}(m,n)$ counts the number of partitions of $n$ with $m$ parts, whose odd parts are less than twice the smallest part.  {In particular, we have $A_\omega(1;q) = A_\omega(q)$, and $B_\omega(1;q) = B_\omega(q)$.}

Following the notation convention introduced in Section \ref{sec:notation}, $\A_\omega(n)$ is the set of partitions of $n$ whose parts, except for one instance of the largest part, form pairs of consecutive non-negative integers.
We denote by $\A_{\omega,2}(n)$ the set of odd Ferrers diagrams of size $n$, and then  $a_{\omega,2}(n)=|\A_{\omega,2}(n)|$.   

{We next define two   generating functions, $A_{\omega,2}$ and $\widetilde{A}_{\omega,2}$, for odd Ferrers diagrams, which we later show are related to $A_\omega$ and $B_\omega$.  Namely, we let}
 \begin{align}\label{def_A2zq}A_{\omega,2}(z;q) := \sum_{k=1}^\infty \frac{zq^{k}}{(zq;q^2)_{k}} = \sum_{m=1}^\infty\sum_{n=1}^\infty a_{\omega,2}(m,n)z^m q^n,\end{align} where $a_{\omega,2}(m,n)$ counts the number of odd Ferrers diagrams of size $n$ with $m$ rows. We note that this interpretation was introduced by Andrews in \cite{A18}.
We also let 
\begin{align}\label{def_tA2zq}\widetilde A_{\omega,2}(z;q) := \sum_{k=1}^\infty \frac{z^kq^{k}}{(q;q^2)_{k}} = \sum_{m=1}^\infty\sum_{n=1}^\infty \widetilde a_{\omega,2}(m,n)z^m q^n,\end{align} where $\tilde a_{\omega,2}(m,n)$ counts the number of odd Ferrers diagrams of size $n$ with $m$ columns.
The combinatorial interpretation of $\widetilde{A}_{\omega,2}(z;q)$ was first described by Li and Yang in \cite[(2.22)]{LY19}. 

\begin{lemma} \label{lem_AandA2}
There is an explicit bijection $\A_\omega(n) \xrightarrow{\sim} \A_{\omega,2}(n)$.
Moreover, if $\mu\mapsto {F}_\mu$ under this bijection, then the number of parts of $\mu$ is equal to the number of rows of ${F}_\mu$ plus the number of rows of ${F}_\mu$ containing at least one $2$, i.e., 
\begin{align} \label{AA2}
    A_\omega(z;q) = A_{\omega,2}(z^2;q)\cdot \frac{1-z^2q}{z(1-zq)}.
\end{align}

\end{lemma}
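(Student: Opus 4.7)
The plan is to exhibit an explicit bijection $\Phi : \A_\omega(n) \to \A_{\omega,2}(n)$ and then read (\ref{AA2}) off from it. Given $\mu \in \A_\omega(n)$ whose largest part is $k$, set aside one copy of $k$; by definition the remaining parts split uniquely into pairs of consecutive non-negative integers $(i-1,i)$ with $i \in \{1,2,\ldots,k\}$. Let $c_i$ record the multiplicity of the pair $(i-1,i)$. I would then set $F_\mu := (k,\lambda)$, where $\lambda$ is the partition having exactly $c_i$ parts equal to $2i-1$ for each $i$. Because every part of $\lambda$ is odd and at most $2k-1$, the width of its $2$-modular Ferrers diagram is at most $k$, so $(k,\lambda)$ really is an odd Ferrers diagram. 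Invertibility is transparent: read $k$ off the first row and recover the $c_i$ from the multiplicity of $2i-1$ in $\lambda$. Size-preservation is immediate, since $|\mu| = k + \sum_i c_i(2i-1) = k+|\lambda| = |F_\mu|$.

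For the refined part-count statement, the partition $\mu$ has $1 + c_1 + 2\sum_{i \geq 2} c_i$ parts: one for the distinguished largest part $k$, one positive part from each $(0,1)$-pair, and two positive parts from each $(i-1,i)$-pair with $i \geq 2$. On the other hand, $F_\mu$ has $1 + \sum_i c_i$ rows in total, of which precisely $\sum_{i \geq 2} c_i$ contain at least one entry equal to $2$, namely the rows corresponding to the parts $\lambda_j \geq 3$. Adding these two quantities reproduces $\ell(\mu)$, which is the combinatorial refinement claimed by the lemma.

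To deduce the generating-function identity (\ref{AA2}), I would translate the bijection into series: grouping $\sum_\mu z^{\ell(\mu)}q^{|\mu|}$ by $k$ and by the multiplicities $c_i$ produces a factor $zq^k$ for the distinguished largest part, a factor $(1-zq)^{-1}$ for the $(0,1)$-pairs, and factors $(1-z^2 q^{2i-1})^{-1}$ for each $i \geq 2$; summing over $k$ recovers the definition of $A_\omega(z;q)$. The companion computation for $A_{\omega,2}(z^2;q)$ differs only in replacing the $k=1$ factor $zq/(1-zq)$ by $z^2q/(1-z^2q)$, so the two series have the same ratio of summands $\frac{1-z^2q}{z(1-zq)}$ for every $k$, giving (\ref{AA2}). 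I do not expect any serious obstacle; the only point that needs care is this $z$-bookkeeping, because each $(0,1)$-pair contributes one part to $\mu$ and a row containing only $1$'s to $F_\mu$, whereas each $(i-1,i)$-pair with $i \geq 2$ contributes two parts to $\mu$ but a single row of $F_\mu$ containing both a $1$ and at least one $2$. This asymmetry is precisely what the refined formula ``rows of $F_\mu$ plus rows of $F_\mu$ containing at least a $2$'' is designed to track.
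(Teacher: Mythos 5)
Your proposal is correct and is essentially the paper's own argument: you remove one instance of the largest part $k$, merge the remaining consecutive pairs $(i-1,i)$ into odd parts $2i-1$ to form $\lambda$, and set $F_\mu=(k,\lambda)$, exactly as in the paper's proof of Lemma \ref{lem_AandA2}. Your only addition is to make explicit the multiplicity bookkeeping ($c_1$ versus $c_i$, $i\geq 2$) and the term-by-term derivation of \eqref{AA2}, details the paper compresses into the remark that the part-count connection ``is clear from this explicit bijection.''
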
 
\begin{proof}
Start with $\mu\in  \A_\omega(n)$, remove one instance of the largest part $\mu_1$, and merge the (consecutive) pairs of parts of the remaining partition to obtain a partition $\lambda$ into odd parts. Then the corresponding odd Ferrers diagram is ${F}_\mu=(\mu_1, \lambda)$. 
 This transformation is invertible: given $(k,\lambda)\in \A_{\omega,2}(n)$, each part of $\lambda$ is odd and hence the sum of a pair of consecutive non-negative integers.
 %The corresponding partition then consists of $k$ and the consecutive pairs of integers.  
 The corresponding partition has parts $k$ and all pairs of parts obtained by splitting the parts of $\lambda$ into consecutive integers.
 The connection between the number of parts of $\mu$ and the number of rows of ${F}_\mu$ is clear from this explicit bijection.
\end{proof}

\begin{theorem} \label{T1} The excess of the number of parts in all partitions in $ \A_\omega(n)$ over the  number of parts in all partitions in $\B_\omega(n)$ equals the number of rows containing at least one $2$ in all odd Ferrers diagrams $F=(k,\lambda)$ of size $n$, which is the same as the number of parts greater than $1$ in $\lambda$.
\end{theorem}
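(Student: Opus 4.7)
The plan is to reduce the claimed identity to a statement about odd Ferrers diagrams by invoking Lemma~\ref{lem_AandA2} on the $\A_\omega$ side, and then to match the remaining $\B_\omega$-count against the row-count of odd Ferrers diagrams. Applying Lemma~\ref{lem_AandA2} term by term, for each $\mu \in \A_\omega(n)$ the number of parts of $\mu$ equals the number of rows of the associated odd Ferrers diagram $F_\mu$ plus the number of those rows containing at least a $2$. Summing over $\mu \in \A_\omega(n)$ gives
\[\sum_{\mu \in \A_\omega(n)} \ell(\mu) \;=\; \sum_{F \in \A_{\omega,2}(n)} (\text{\# rows of } F) \;+\; \sum_{F \in \A_{\omega,2}(n)} (\text{\# rows of } F \text{ containing a } 2).\]

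The key remaining step, and what I expect to be the main obstacle, is to show
\[\sum_{\beta \in \B_\omega(n)} \ell(\beta) \;=\; \sum_{F \in \A_{\omega,2}(n)} (\text{\# rows of } F),\]
equivalently, the two-variable identity $B_\omega(z;q) = A_{\omega,2}(z;q)$, which would refine the Andrews--Dixit--Yee identity $A_\omega(q) = B_\omega(q)$ by tracking the number of parts of $\beta$ against the number of rows of the odd Ferrers diagram. One approach is a direct size-preserving bijection $\B_\omega(n) \to \A_{\omega,2}(n)$ sending number of parts to number of rows; the naive attempt ``take the smallest part of $\beta$ as $k$ and let $\lambda$ be the remaining partition'' fails because $\lambda$ need not consist of odd parts (for example, when $\beta=(3,1,1)$), so some repackaging step will be required, perhaps in the spirit of equalizing pairs of odd parts of $\beta$ into even parts. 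A more analytic route is to revisit the $q$-hypergeometric manipulations behind Andrews--Dixit--Yee while carrying the formal variable $z$ throughout; both sides are easily seen to vanish at $z=0$ and to equal $A_\omega(q)$ at $z=1$, so it suffices to verify a common $q$-difference relation in $z$ or to match coefficients of $z^m q^n$ using a $q$-series transformation (Heine-type).

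With that identity in hand, subtracting the two displayed equalities yields
\[\sum_{\mu \in \A_\omega(n)} \ell(\mu) \;-\; \sum_{\beta \in \B_\omega(n)} \ell(\beta) \;=\; \sum_{F \in \A_{\omega,2}(n)} (\text{\# rows of } F \text{ containing a } 2),\]
which is the first asserted equality. The second equality in the theorem is then immediate: in $F=(k,\lambda)$ the top row consists only of $1$'s, while the row corresponding to a part $\lambda_i$ of $\lambda$ contains a $2$ precisely when $\lceil \lambda_i/2\rceil \geq 2$, i.e., $\lambda_i \geq 3$; since $\lambda_i$ is odd, this is equivalent to $\lambda_i > 1$.
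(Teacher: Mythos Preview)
Your reduction is exactly the paper's fourth proof: use Lemma~\ref{lem_AandA2} to write the $\A_\omega$-part count as rows plus rows-containing-a-$2$, then identify the $\B_\omega$-part count with the total row count via $B_\omega(z;q)=A_{\omega,2}(z;q)$, and subtract. The identity you flag as the main obstacle is not new: it is equation~\eqref{eqn_BzA2} in the paper, obtained by combining two results of Andrews--Yee \cite{AY}, namely $B_\omega(z;q)=\widetilde A_{\omega,2}(z;q)$ (their equation~(8)) and $\widetilde A_{\omega,2}(z;q)=A_{\omega,2}(z;q)$ (their equation~(16), which also has the combinatorial interpretation that conjugation of odd Ferrers diagrams swaps row and column counts). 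So your speculative bijective or $q$-difference arguments are unnecessary; a citation suffices, and with it your proof is complete and coincides with the paper's.
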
 

We will provide four proofs of this theorem. 

We first introduce some useful identities.
From equation (8) of \cite{AY}, we have 
\begin{equation}\label{BzA2}
    B_\omega(z;q)=\widetilde A_{\omega,2}(z;q).
\end{equation}
Moreover, from equation (16) of \cite{AY}, we have 
\begin{equation}\label{nozprod}
    \widetilde A_{\omega,2}(z;q)=A_{\omega,2}(z;q).
\end{equation}
(This can also be seen from the fact that conjugation provides a bijection $\widetilde\A_{\omega,2}(m,n) \xrightarrow{\sim} \A_{\omega,2}(m,n)$ 
\cite[p.539]{LY19}.)
From \eqref{BzA2} and \eqref{nozprod}, we have \begin{align}\label{eqn_BzA2} 
B_\omega(z;q)  = A_{\omega,2}(z;q).
\end{align}

 By Lemma \ref{lem_AandA2} and Theorem \ref{T1}, or, after differentiating \eqref{eqn_BzA2} at $z=1$, we have the following result.
 
\begin{corollary} The total number of parts in all partitions in  $\B_\omega(n)$ equals the total number of rows in all odd Ferrers diagrams of size $n$. 
\end{corollary}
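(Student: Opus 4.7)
The plan is to deduce the corollary directly from the identity $B_\omega(z;q) = A_{\omega,2}(z;q)$ established in \eqref{eqn_BzA2}, via either of the two short routes indicated in the statement.

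For the analytic route, I differentiate \eqref{eqn_BzA2} with respect to $z$ and evaluate at $z=1$. From the series expansion in \eqref{eqn_Bzq},
\[\left.\frac{\partial}{\partial z} B_\omega(z;q)\right|_{z=1} = \sum_{n=1}^\infty \Bigl(\sum_{m=1}^\infty m\, b_\omega(m,n)\Bigr) q^n,\]
whose coefficient of $q^n$ is exactly the total number of parts summed over all partitions in $\B_\omega(n)$. The analogous operation applied to \eqref{def_A2zq} produces $\sum_{n}\bigl(\sum_m m\, a_{\omega,2}(m,n)\bigr) q^n$, whose $q^n$-coefficient is the total number of rows summed over all odd Ferrers diagrams of size $n$. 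Equating these coefficients gives the corollary.

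For the combinatorial route, I combine Lemma \ref{lem_AandA2} with Theorem \ref{T1}. The bijection $\mu \mapsto F_\mu$ of Lemma \ref{lem_AandA2} satisfies
\[\ell(\mu) = (\text{rows of } F_\mu) + (\text{rows of } F_\mu \text{ containing at least a } 2).\]
Summing over $\mu \in \A_\omega(n)$, the total number of parts in $\A_\omega(n)$ equals the total number of rows in $\A_{\omega,2}(n)$ plus the total number of rows containing at least a $2$. By Theorem \ref{T1} this last quantity is precisely the excess of the total parts in $\A_\omega(n)$ over the total parts in $\B_\omega(n)$, and subtracting it from both sides yields the equality between the total parts in $\B_\omega(n)$ and the total rows in $\A_{\omega,2}(n)$.

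There is no substantive obstacle here, since this really is a corollary: all nontrivial content has been packaged into \eqref{eqn_BzA2}, Lemma \ref{lem_AandA2}, and Theorem \ref{T1}. The only small point to verify on the analytic side is that termwise $z$-differentiation is legitimate, which is automatic because the coefficient of each $q^n$ in both $B_\omega(z;q)$ and $A_{\omega,2}(z;q)$ is a polynomial in $z$ of bounded degree.
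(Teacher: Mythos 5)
Your proposal is correct and follows exactly the two routes the paper itself invokes: differentiating \eqref{eqn_BzA2} at $z=1$ and comparing coefficients, or combining Lemma~\ref{lem_AandA2} with Theorem~\ref{T1} and cancelling the excess term. You have merely written out the details the paper leaves implicit, so there is nothing to add.
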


%\begin{proof}%[Second proof]
%From \eqref{eqn_BzA2} we have 
%$$
%\left.\dfrac{\partial (A_{\omega,2}(z;q))}{\partial z} \right|_{z=1}=\left.\dfrac{\partial (B_\omega(z;q))}{\partial z} \right|_{z=1}.
%$$
%The left-hand side of this equality is the generating function for the total number of  rows in all odd Ferrers diagrams of size $n$, while the right-hand side is the generating function for the total number of parts in all partitions in $\B_\omega(n)$. The statement follows.
%\end{proof}

All four proofs  of Theorem \ref{T1}  make use of the fact that
\begin{align}\label{abdiff}
\left.\frac{\partial (A_\omega(z;q)- B_\omega(z;q)) }{\partial z}\right|_{z=1} \end{align} is the generating function for the excess of the number of parts in all partitions in $ \A_\omega(n)$ over the  number of parts in all partitions in $\B_\omega(n)$.

\begin{proof}[First proof]
We compute the derivative difference \eqref{abdiff}, using \eqref{AA2}, \eqref{BzA2}, and \eqref{nozprod}:
\begin{align*}
\left.\frac{\partial (A_\omega(z;q)- B_\omega(z;q)) }{\partial z}\right|_{z=1}
&=\left.\frac{\partial}{\partial z}\right|_{z=1} \left( \frac{1-z^2q}{z(1-zq)}\cdot \widetilde A_{\omega,2}(z^2;q)- \widetilde A_{\omega,2}(z;q)\right)  \\
&=\left.\frac{\partial \widetilde A_{\omega,2}(z;q)}{\partial z}\right|_{z=1} - \frac{1}{1-q}A_{\omega}(q) \\
&=\sum_{k=1}^\infty\frac{kq^{k}}{(q;q^2)_{k}}-\frac{1}{1-q}\sum_{k=1}^\infty\frac{q^{k}}{(q;q^2)_{k}}.
\end{align*}
The second term $\frac{1}{1-q}\sum_{k=1}^\infty\frac{q^{k}}{(q;q^2)_{k}}$ is the generating function for the number of pairs $({F},(1^b))\vdash n$, where ${F}$ is an odd Ferrers diagram and $b\geq0$ is an integer.

By mapping a pair $({F},(1^b))$ to an odd Ferrers diagram with at least $b$ rows of size $1$ and coloring the final $b$ rows of size $1$, we can see that  $\frac{1}{1-q}\sum_{k=1}^\infty\frac{q^{k}}{(q;q^2)_{k}}$ is 
also the generating function for the number of odd Ferrers diagrams ${F}=(k,\lambda)$ weighted by 
$m_\lambda(1) + 1$, where $m_\lambda(1)$ 
is the number of parts equal to  $1$ in $\lambda$.  

Hence $\left.\frac{\partial (A_\omega(z;q)- B_\omega(z;q)) }{\partial z}\right|_{z=1}$ is the generating function for the number of odd Ferrers diagrams ${F}=(k,\lambda)$ weighted by $k-(m_\lambda({1})+1)$.

Note that conjugation provides a bijection between odd Ferrers diagrams of size $n$ with $m$ rows and odd Ferrers diagrams of size $n$ with $m$ columns. Hence for a conjugate pair ${F}=(k,\lambda)$ and ${F'}=(j,\mu)$, we have 
\begin{align*}
    &k-(m_\lambda({1})+1)+j-(m_\mu({1})+1)=j-(m_\lambda({1})+1)+k-(m_\mu({1})+1)\\=&(\ell(\lambda)+1)-(m_\lambda({1})+1)+(\ell(\mu)+1)-(m_\mu({1})+1).
\end{align*}

 Therefore, summing over all odd Ferrers diagrams of size $n$, the generating function stays the same if we replace the weight by $(\ell(\lambda)+1)-(m_\lambda({1})+1)$, which is the number of rows containing at least one $2$ in ${F}$.
\end{proof}

\begin{proof}[Second proof] 
We compute the derivative difference \eqref{abdiff}, using \eqref{AA2} and \eqref{eqn_BzA2}:
\begin{align*}
\left.\frac{\partial (A_\omega(z;q)- B_\omega(z;q)) }{\partial z}\right|_{z=1}
&=\left.\frac{\partial}{\partial z}\right|_{z=1} \left( \frac{1-z^2q}{z(1-zq)}\cdot  A_{\omega,2}(z^2;q)-  A_{\omega,2}(z;q)\right)  \\
&=\left.\frac{\partial  A_{\omega,2}(z;q)}{\partial z}\right|_{z=1} - \frac{1}{1-q}A_{\omega}(q).
\end{align*}

We have seen in the first proof that the second term $\frac{1}{1-q}A_\omega(q)$ is the generating function for the number of odd Ferrers diagrams ${F}=(k,\lambda)$ weighted by $m_\lambda({1})+1$.
Hence $\left.\frac{\partial (A_\omega(z;q)- B_\omega(z;q)) }{\partial z}\right|_{z=1}$ is the generating function for the number of rows containing at least one $2$ in all odd Ferrers diagrams of size $n$.
\end{proof}

\begin{proof}[Third proof]
We compute the derivative difference \eqref{abdiff}, using \eqref{eqn_BzA2}:
\begin{align}
\left.\frac{\partial (A_\omega(z;q)- B_\omega(z;q)) }{\partial z}\right|_{z=1}
&=\left.\frac{\partial}{\partial z}\right|_{z=1} \left(  A_{\omega}(z;q)-  A_{\omega,2}(z;q)\right) \label{eqn_two_deriv_diff} \\
&=  \sum_{k=1}^\infty \frac{q^{k}}{(q;q^2)_{k}}\left(\sum_{j=1}^{k-1} \frac{q^{2j+1}}{1-q^{2j+1}}\right). 
\label{eqn_d(A-B)dz2}
\end{align}

This is the generating function for the number of pairs $({F}, ((2j+1)^b))\vdash n$, where ${F}$ is an odd Ferrers diagram and $j,b\geq1$ are integers. 
For each pair $({F}, ((2j+1)^b))\vdash n$, we insert $b$ copies of $(2j+1)$ as $2$-modular rows into ${F}$ and color the final $b$ rows of size $(2j+1)$ to obtain a colored odd Ferrers diagram. The number of such colored odd Ferrers diagrams of size $n$ is equal to the number of rows containing at least one $2$ in all odd Ferrers diagrams of size $n$.
\end{proof}

\begin{proof}[Fourth proof.] 

From (\ref{eqn_two_deriv_diff}), we have
\begin{equation}
    \sum_{m=1}^{\infty} (m a_{\omega}(m,n)-m b_{\omega}(m,n))=\sum_{m=1}^{\infty} (m a_{\omega}(m,n)-m a_{\omega,2}(m,n)),
\end{equation}
for each $n\geq 1$.  The left hand side of \eqref{eqn_two_deriv_diff} is the excess in the statement of Theorem \ref{T1}, whereas the right hand side is the excess of the number of parts in all partitions in $ \A_\omega(n)$ over the number of rows in all odd Ferrers diagrams in $\A_{\omega,2}(n)$. 
By Lemma \ref{lem_AandA2}, $\A_\omega(n)\xrightarrow{\sim} \A_{\omega,2}(n)$, and the excess is precisely the number of rows containing at least one $2$ in all odd Ferrers diagrams of size $n$.
\end{proof}

From the third proof of Theorem \ref{T1}, we obtain new interpretations of the derivative difference  \eqref{eqn_two_deriv_diff}  in Corollaries \ref{c1} and \ref{c2} below.   These are analogous to the original Beck identity which can be reinterpreted as follows. The excess of the total number of parts in all partitions of $n$ into distinct parts over the  total number of parts in all partitions of $n$ into odd parts equals the number of pairs $(\xi, \eta)\vdash n$, where $\xi$ is a partition into odd parts and $\eta$ is a rectangular partition into equal even parts.  This is also the number of pairs $(\xi, \eta)\vdash n$, where $\xi$ is a partition into distinct parts and $\eta$ is a rectangular partition with at least two parts. 

\begin{corollary} \label{c1} The excess of the total number of parts in all partitions in $\A_\omega(n)$ over the  total number of parts in all odd Ferrers diagrams of size $n$ equals the number of pairs $(\xi, \eta)\vdash n$ where, $\xi$ is an odd Ferrers diagram and $\eta$ is a rectangular partition into odd parts of size at least $3$.\end{corollary}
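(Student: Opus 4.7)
The plan is to read off a combinatorial interpretation from the generating function
\[
\sum_{k=1}^{\infty}\frac{q^{k}}{(q;q^{2})_{k}}\sum_{j=1}^{k-1}\frac{q^{2j+1}}{1-q^{2j+1}},
\]
which the third proof of Theorem~\ref{T1} identifies with $\left.\frac{\partial(A_\omega(z;q)-B_\omega(z;q))}{\partial z}\right|_{z=1}$ via~\eqref{eqn_two_deriv_diff}--\eqref{eqn_d(A-B)dz2}. As was observed in the fourth proof of Theorem~\ref{T1}, this derivative difference has coefficient $\sum_{m\geq1}\bigl(m\,a_\omega(m,n)-m\,a_{\omega,2}(m,n)\bigr)$ at $q^n$, which, identifying the rows of an odd Ferrers diagram with its parts, is precisely the excess described in the corollary.

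My next step would be to interpret the right-hand side combinatorially. By~\eqref{def_tA2zq} at $z=1$, the factor $\frac{q^k}{(q;q^{2})_k}$ generates odd Ferrers diagrams $\xi$ with exactly $k$ columns. The factor $\frac{q^{2j+1}}{1-q^{2j+1}}=\sum_{b\geq1}q^{(2j+1)b}$ generates rectangular partitions $\eta=((2j+1)^b)$ with $b\geq1$ parts each equal to the odd integer $2j+1\geq 3$. After multiplying and interchanging the order of summation, the double sum becomes the generating function for pairs $(\xi,\eta)\vdash n$ in which $\xi$ is an odd Ferrers diagram and $\eta$ is a rectangular partition into odd parts of size at least $3$, with the indexing constraint $2j+1\leq 2k-1$ built in (so that the parts of $\eta$ may be inserted into $\xi$ as additional $2$-modular rows without exceeding its column count). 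This realizes the pairs described in the corollary.

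The main obstacle will be the bookkeeping of indices. I will need to check that every such pair is counted exactly once by the double sum, by tracking $k$ as the number of columns of $\xi$, $2j+1$ as the odd part of $\eta$, and $b$ as its multiplicity; for a fixed $k\geq 2$ the admissible odd part sizes $2j+1$ range over $\{3,5,\dots,2k-1\}$, while $b\geq 1$ is unrestricted. Comparing coefficients of $q^n$ on both sides then yields the corollary.
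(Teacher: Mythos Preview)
Your approach is essentially the paper's own: the corollary is stated as an immediate consequence of the third proof of Theorem~\ref{T1}, and you correctly extract both sides from~\eqref{eqn_two_deriv_diff}--\eqref{eqn_d(A-B)dz2}, interpreting $\frac{q^k}{(q;q^2)_k}$ via $\widetilde A_{\omega,2}$ as the generating function for odd Ferrers diagrams with $k$ columns and $\frac{q^{2j+1}}{1-q^{2j+1}}$ as rectangular partitions $((2j+1)^b)$.

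You are in fact more careful than the paper in flagging the coupling constraint $2j+1\leq 2k-1$ (equivalently, $j\leq k-1$) between $\xi$ and $\eta$. The paper's third proof states the interpretation as ``pairs $(F,((2j+1)^b))\vdash n$ where $F$ is an odd Ferrers diagram and $j,b\geq 1$'' without writing this constraint, and Corollary~\ref{c1} inherits that looseness. But the constraint is essential: it is exactly what allows the insertion bijection in the third proof, and without it the count is strictly too large (for instance at $n=5$ the excess is $1$, the constrained pair count is $1$, but there are $2$ unconstrained pairs, namely $((2,\emptyset),(3))$ and $((1,(1)),(3))$). So your parenthetical remark that the constraint is ``built in'' so the rows can be inserted is the right way to read the corollary; just be aware that the statement as printed omits it.
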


\begin{corollary}\label{c2}
The excess of the number of parts in all partitions in $\A_\omega(n)$ over the number of parts in all partitions in $\B_\omega(n)$ equals the number of pairs $(\lambda, \eta)\vdash n$, where $\lambda\in\A_\omega$ and $\eta$ is a rectangular partition into odd parts of size at least $3$.
\end{corollary}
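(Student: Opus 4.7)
The plan is to read off Corollary \ref{c2} directly from the generating function identity \eqref{eqn_d(A-B)dz2} established in the third proof of Theorem \ref{T1}, namely that the excess in the statement has generating function
\[
\sum_{k=1}^\infty \frac{q^k}{(q;q^2)_k}\sum_{j=1}^{k-1}\frac{q^{2j+1}}{1-q^{2j+1}}.
\]
First I would identify the outer factor $\sum_{k=1}^\infty q^k/(q;q^2)_k$ as $A_\omega(q)$, the generating function for $\A_\omega$, and then invoke Lemma \ref{lem_AandA2} to reinterpret the summation index $k$ as the largest part $\lambda_1$ of $\lambda\in\A_\omega$ (equivalently, as the length of the first row of the odd Ferrers diagram $F_\lambda$ associated to $\lambda$).

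Second, I would expand the inner factor as $\sum_{j=1}^{k-1}\sum_{b=1}^\infty q^{(2j+1)b}$, which visibly enumerates rectangular partitions $\eta=((2j+1)^b)$ with $b\geq 1$ and $2j+1$ an odd integer satisfying $3\leq 2j+1\leq 2k-1$: the lower bound $j\geq 1$ forces the parts of $\eta$ to be at least $3$, and the upper bound $j\leq k-1$ forces them to be at most $2\lambda_1-1$, which is precisely the condition that a $2$-modular row of length $j+1$ fits beneath a first row of length $k$. Multiplying the two factors and extracting the coefficient of $q^n$ then produces exactly the pairs $(\lambda,\eta)\vdash n$ with $\lambda\in\A_\omega$ and $\eta$ a rectangular partition into equal odd parts of size at least $3$ (implicitly, of size at most $2\lambda_1-1$) that the corollary claims to count.

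I do not anticipate a serious obstacle here: the analytic content is already in \eqref{eqn_d(A-B)dz2} and the combinatorial content is already packaged in Lemma \ref{lem_AandA2}, so the whole argument reduces to relabeling and bookkeeping. The one subtle point to watch is propagating the inner summation constraint $j\leq k-1$ through the bijection of Lemma \ref{lem_AandA2}: it has to be rephrased as a restriction on the parts of $\eta$ expressed in terms of $\lambda_1$, which requires remembering that $k$ plays the dual role of outer summation index and length of the first row of $F_\lambda$.
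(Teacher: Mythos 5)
Your proposal is correct and follows exactly the paper's route: Corollary \ref{c2} is presented there as an immediate reinterpretation of the generating function \eqref{eqn_d(A-B)dz2} from the third proof of Theorem \ref{T1}, with Lemma \ref{lem_AandA2} used, just as you do, to transfer the interpretation from odd Ferrers diagrams (where $k$ is the first-row length) to partitions in $\A_\omega$ (where $k$ becomes the largest part $\lambda_1$). Your care in propagating the inner constraint $j\le k-1$, i.e.\ requiring the parts of $\eta$ to be at most $2\lambda_1-1$, is well placed: this restriction is genuinely encoded in \eqref{eqn_d(A-B)dz2} even though the corollary's wording leaves it implicit (for instance, at $n=4$ the excess is $0$, matching the constrained count, while the unconstrained pair count would be $1$ via $(\lambda,\eta)=((1),(3))$).
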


\section{The mock theta function $\nu$}\label{sec_nusection} Recall from Section \ref{sec_mockintro} the mock theta function 
	$$\nu(q) := \sum_{k=0}^\infty \frac{q^{k(k+1)}}{(-q;q^2)_{k+1}}.$$  
	We write 
	\begin{align}\label{def_numinusq}\nu(-q) = A_\nu(q) = \sum_{n=0}^\infty a_\nu(n) q^n.\end{align}
	Since $k(k+1)=2+4+\cdots+2k$, $a_\nu(n)$ counts the number of partitions of $n$ whose even parts are distinct, and if $m$ occurs as a part, then so does every positive even number less than $m$. 

	 Let
		$$
		A_{\nu,2}(q) := \sum_{k=0}^\infty (-q;q^2)_kq^k =: \sum_{n=0}^\infty a_{\nu,2}(n) q^n.
		$$ We recall   \cite[(44)]{ADY} which gives the identity
$$ \sum_{m=0}^\infty \frac{q^{m^2}x^m}{(y;q^2)_{m+1}} = \sum_{m=0}^\infty (-xq/y;q^2)_{m} \ y^m.$$
  Letting $x=y=q$, we have 
		 $$\nu(-q) = A_{\nu,2}(q).$$
Note that $A_{\nu,2}(q)$ is the generating function for the number of  odd Ferrers diagrams $(k,\lambda)$ where the partition $\lambda$ has distinct parts. 
	
We also define $$B_\nu(q):=
	\sum_{k=0}^\infty q^k (-q^{k+1};q)_k (-q^{2k+2};q^2)_\infty = \sum_{n=0}^\infty b_\nu(n) q^n,$$ where $b_\nu(n)$ counts the number of partitions of $n$ into distinct parts, in which each odd part is less than twice the smallest part, and zero can be a part (note that this is different from our usual convention). For example, $(6,4,3,2)$ and $(6,4,3,2,0)$
 are counted as different partitions, the former from the term $q^k (-q^{k+1};q)_k (-q^{2k+2};q^2)_\infty$ for $k=2$ while the latter from the term for $k=0$.   Then, as stated in \cite[Theorem 4.1]{ADY}, we have the identity \begin{align}\label{eqn_nuqBnu}\nu(-q)=B_\nu(q).\end{align}

%Let $\A_{\nu}(n)$ denote the set of partitions of $n$ whose even parts are distinct, and if $m$ occurs as a part, then so does every positive even number less than $m$. 
%Let $\A_{\nu,2}(n)$ denote the set of odd Ferrers diagrams $(k,\lambda)$ of size $n$ where the partition $\lambda$ has distinct parts.
%Let $\B_{\nu}(n)$ denote the set of partitions of $n$ with distinct parts in which each odd part is less than twice the smallest part and zero can be a part.
%Thus $|\A_{\nu}(n)|=a_\nu(n)$, $|\A_{\nu,2}(n)|=a_{\nu,2}(n)$ and $|\B_{\nu}(n)|=b_\nu(n)$.%}

	\begin{lemma} \label{lem_AnuandAnu2}
 There is an explicit bijection  $ \A_{\nu,2}(n)\xrightarrow{\sim} \A_\nu(n) $. Moreover, if $(k,\lambda) \mapsto \pi$ under this bijection, then $\ell(\pi)=k$ and $\ell(\lambda)$ is the number of even parts in $\pi$. 
	\end{lemma} 
	\begin{proof}
	We adapt the bijection in \cite[Theorem 1.3]{LY19}.
		We start with an odd Ferrers diagram $F=(k,\lambda)\in \A_{\nu,2}(n)$, where $\lambda$ has distinct parts and length $\ell$.
   We will associate to $F$ a partition $\pi$ in $\A_{\nu}(n)$. 
   Consider the subdiagram  $T=(\ell, (2\ell-1, 2\ell-3, \ldots, 3,1))$  of $F$. 
%   \ccom{I think the second entry should be $2\ell-3$.} 
%   \cycom{Agree and changed}
   We map $T$ to the partition $\varepsilon=(2\ell, 2\ell-2, \ldots, 4, 2)$.  We remove $T$ from $F$ and shift all remaining boxes to the left to obtain a diagram $R$. The conjugate of $R$ is the $2$-modular diagram of a partition $\rho$ with odd parts. Define $\pi:=\varepsilon\cup \rho$.
	 
    From the procedure above, we see that $\pi$ has $k$ parts and the number of parts of $\lambda$ is equal to the number of even parts in $\pi$.
\end{proof}
	\begin{example}
		Let $F=(k,\lambda)=(5, (9,5,1))$. As in Lemma \ref{lem_AnuandAnu2}, we decompose $F=T+R$ as shown in Figure \ref{fig:3}. 
		\begin{figure}[h]
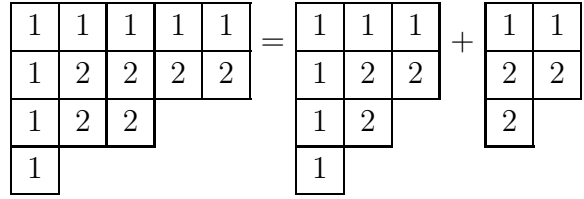

		\begin{center}
	\begin{ytableau}
    1&1&1&1&1\\
    1&2&2&2&2\\
    1&2&2\\
    1
    \end{ytableau}
    =
    \begin{ytableau}
    1&1&1\\
    1&2&2\\
    1&2\\
    1
    \end{ytableau}
    +
    \begin{ytableau}
    1& 1\\
    2& 2\\
    2
    \end{ytableau}
    \end{center} \caption{A decomposition of an odd Ferrers diagram as in Lemma \ref{lem_AnuandAnu2}}
  \label{fig:3}\end{figure}
    Then, $\varepsilon=(6,4,2)$,  $\rho=(5,3)$ and $\pi=(6,5,4,3,2)$. 
    \end{example}
	
{As in Section \ref{BeckIdentity}, we introduce two variable generalizations of $A_\nu(q), A_{\nu,2}(q),$ and $B_\nu(q)$ in which the exponent of $z$ keeps track of the number of parts in partitions.}

Let
	\begin{equation}
	\label{A_nu}
	    A_{\nu}(z;q):=\sum_{k=0}^\infty \frac{z^kq^{k^2+k}}{(zq;q^2)_{k+1}} =:\sum_{m=0}^\infty\sum_{n=0}^\infty a_{\nu}(m,n)z^mq^n.
	\end{equation}
	Since $z^kq^{k^2+k}=zq^2\cdot zq^4\cdots zq^{2k}$,
we find that
 $a_{\nu}(m,n)$ counts the number of partitions in $\A_{\nu}(n)$ with $m$ parts. %\ccom{I don't know about this change. To me it seems less clear now. Maybe we should explain after \ref{def_numinusq} that $q^{k(k+1}$ generates parts $2, 4, \ldots, k$, and $\frac{1}{(q;q^2)_\infty}$ generates odd parts at most $2k+1$. Then just saying that $z$ keeps track of the number of parts should be enough to give the meaning of $a_{\nu}(m,n)$.}

Let
	\begin{equation}
	\label{A_nu_two}
	    A_{\nu,2}(z;q):=\sum_{k=0}^\infty(-zq;q^2)_k z q^k =\sum_{m=0}^\infty\sum_{n=0}^\infty a_{\nu,2}(m,n)z^mq^n,
	\end{equation}
	where $a_{\nu,2}(m,n)$ counts the number of odd Ferrers diagrams $(k, \lambda)$ in $\A_{\nu,2}(n)$ with $m$ rows.
	
Let
\begin{equation}
     B_\nu(z;q):=\sum_{k=0}^\infty zq^k (-zq^{k+1};q)_k (-zq^{2k+2};q^2)_\infty= \sum_{m=0}^\infty\sum_{n=0}^\infty b_{\nu}(m,n)z^mq^n,
\end{equation}
where $ b_{\nu}(m,n)$ counts the number of partitions in $\B_{\nu}(n)$ with $m$ parts. Recall that partitions in $\B_{\nu}(n)$ can have $0$ as a part, which we do count in the number of parts -- for example, the partition $(6,4,2)$ has three parts, while $(6,4,2,0)$ has four parts.

\begin{theorem}
\label{thm:nu_partition}
The excess of the total number of parts in all partitions in $\A_{\nu}(n)$ over  the  total  number  of  parts  in all partitions in $\B_{\nu}(n)$ equals the sum of the number of odd parts minus $1$ over all partitions in $\A_{\nu}(n)$, or equivalently the sum of ranks over all odd Ferrers diagrams in $\A_{\nu,2}(n)$. {If $n\geq 1$, the excess is non-negative.}
\end{theorem}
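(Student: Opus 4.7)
The plan is to adapt the first proof of Theorem~\ref{T1} to the $\nu$-setting. The generating function
\[
\left.\frac{\partial\bigl(A_{\nu}(z;q)-B_{\nu}(z;q)\bigr)}{\partial z}\right|_{z=1}
\]
has as its coefficient of $q^{n}$ exactly the excess of the total number of parts in $\A_{\nu}(n)$ over the total number of parts in $\B_{\nu}(n)$. My first step is to observe, via Lemma~\ref{lem_AnuandAnu2}, that the bijection $\pi\leftrightarrow(k,\lambda)$ sends $\ell(\pi)\mapsto k$, so
\[
\left.\frac{\partial A_{\nu}(z;q)}{\partial z}\right|_{z=1}
= \sum_{n\geq 0}\biggl(\sum_{(k,\lambda)\in \A_{\nu,2}(n)}k\biggr)q^{n}.
\]

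The key step is to establish the two-variable identity $A_{\nu,2}(z;q)=B_{\nu}(z;q)$, a refinement of the known one-variable identity $A_{\nu,2}(q)=\nu(-q)=B_{\nu}(q)$, which can be verified directly in small degrees (I have checked coefficients of $q^{n}$ for $n\leq 3$). I would attempt this by $q$-series manipulation — tracking $z$ through the chain of identities used in the proof of \cite[Theorem~4.1]{ADY} — or, failing that, by an explicit bijection between $\B_{\nu}(n)$ and $\A_{\nu,2}(n)$ matching the number of parts of $\beta$ with the number of rows of $(k,\lambda)$, in the spirit of the Andrews--Yee bijection behind \eqref{BzA2}. Granted this identity, differentiating at $z=1$ yields
\[
\left.\frac{\partial B_{\nu}(z;q)}{\partial z}\right|_{z=1}
= \sum_{n\geq 0}\biggl(\sum_{(k,\lambda)\in \A_{\nu,2}(n)}(\ell(\lambda)+1)\biggr)q^{n},
\]
so the excess equals $\sum_{(k,\lambda)}(k-\ell(\lambda)-1)=\sum_{F\in\A_{\nu,2}(n)}\rank(F)$. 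Translating back through Lemma~\ref{lem_AnuandAnu2}, $k-\ell(\lambda)-1=\ell(\pi)-\ell_{e}(\pi)-1=\ell_{o}(\pi)-1$, which gives the first equivalence in the theorem.

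For non-negativity when $n\geq 1$, observe that a partition $\pi\in \A_{\nu}(n)$ with $\ell_{o}(\pi)=0$ must consist only of even parts satisfying the staircase condition, forcing $\pi=(2,4,\ldots,2m)$ and $n=m(m+1)$. Hence at most one partition in $\A_\nu(n)$ contributes $-1$ to $\sum_{\pi}(\ell_{o}(\pi)-1)$, while $(1^{n})\in \A_{\nu}(n)$ contributes $n-1\geq 0$ and every other partition contributes $\ell_{o}(\pi)-1\geq 0$; the total is therefore non-negative, as can also be verified directly in the edge cases $n=1,2$. The main obstacle is the two-variable identity $A_{\nu,2}(z;q)=B_{\nu}(z;q)$ — the rest of the argument is essentially bookkeeping, but this $z$-refinement of the Andrews--Dixit--Yee result carries the real content of the theorem.
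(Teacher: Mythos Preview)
Your approach is correct and matches the paper's second proof almost exactly. The two-variable identity $A_{\nu,2}(z;q)=B_{\nu}(z;q)$ that you flag as the main obstacle is already in the literature: \cite[Theorem~1]{AY} (equivalently \cite[(7)]{AY}) gives $B_{\nu}(z;q)=\sum_{k\geq 0}\frac{z^{k+1}q^{k^{2}+k}}{(q;q^{2})_{k+1}}$, and this equals $A_{\nu,2}(z;q)$ by \cite[Theorem~2]{AY} (or, as you in effect observe, by reading Lemma~\ref{lem_AnuandAnu2} as a generating-function identity tracking $\ell_{e}(\pi)=\ell(\lambda)$). So no new work is needed there.

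Your non-negativity argument is a minor but valid variant: you offset the unique all-even staircase $(2m,2m-2,\ldots,2)$ against $(1^{n})$, whose contribution $n-1$ absorbs the $-1$. The paper instead pairs $(2m,\ldots,2)$ with the full staircase $(2m-1,2m-2,\ldots,2,1)$ (Proof~1) or, in the Ferrers-diagram language, with the conjugate diagram (Proof~2); either pairing suffices.
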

 We provide two proofs of this theorem. 
\begin{proof}[Proof 1]  We have 
\begin{align*}
\dfrac{\partial}{\partial z}&\left.\left(A_{\nu}(z;q) - B_{\nu}(z;q)\right)\right|_{z=1} \\ 
 & = \dfrac{\partial}{\partial z} \left.\left(\sum_{k=0}^\infty \dfrac{z^kq^{k^2+k}}{(zq;q^2)_{k+1}} -\sum_{k=0}^\infty zq^k(-zq^{k+1};q)_k(-zq^{2k+2};q^2)_\infty\right)\right|_{z=1}\\
&=\dfrac{\partial}{\partial z}\left.\left(\sum_{k=0}^\infty \dfrac{z^kq^{k^2+k}}{(zq;q^2)_{k+1}}-\sum_{k=0}^\infty \frac{z^{k+1}q^{k^2+k}}{(q;q^2)_{k+1}}\right)\right|_{z=1}\\
&=\sum_{k=0}^\infty \dfrac{q^{k^2+k}}{(q;q^2)_{k+1}}\left(\left(\sum_{j=0}^k\frac{q^{2j+1}}{1-q^{2j+1}}\right)-1\right)\\
&=: \sum_{n=0}^\infty c(n) q^n,
\end{align*}
where we use \cite[(7)]{AY} in the second line above.  Note that $c(n)$ counts the number of odd parts in all partitions in $\A_{\nu}(n)$ minus the number of partitions in $\A_{\nu}(n)$. 
Rephrased, we have
$$\sum_{n=0}^\infty c(n)q^n=\sum_{n=0}^{\infty}\sum_{\pi\in \A_{\nu}(n)}(\ell_o(\pi)-1)q^{n}.$$

To show that $c(n)\geq 0$ for $n\geq 1$,  notice that the only partition $\pi$ with $\ell_o(\pi)-1<0$ is   $\pi=(2m,2m-2, \ldots, 2)$. For this $\pi$, we can split the largest  part $2m$ into two odd parts, namely $2m-1$ and $1$, to obtain $\widetilde{\pi}=(2m-1, 2m-2, \ldots, 2, 1)$, another partition of the same size in $\A_{\nu}$, such that $(\ell_o(\pi)-1)+(\ell_o(\widetilde{\pi})-1)=-1+1=0$. All the other partitions in $\A_{\nu}$ have at least one odd part. Therefore $c(n)=\sum_{\pi\in \A_{\nu}(n)}(\ell_o(\pi)-1)\geq 0$. 
\end{proof}

\begin{proof}[Proof 2]
From Lemma \ref{lem_AnuandAnu2}, if the partition $\pi\in \A_{\nu}(n)$ corresponds to the odd Ferrers diagram $F=(k,\lambda)\in \A_{\nu,2}(n)$, then the number of parts of $\pi$ is $k$.
In terms of generating functions, we  have the identity
\[
A_\nu(z;q) = \sum_{k=0}^{\infty} \frac{z^kq^{k^2+k}}{(zq;q^2)_{k+1}} = \sum_{k=0}^\infty (-q;q^2)_k z^k q^k.
\]
Thus, $\left.\frac{\partial}{\partial z} \right|_{z=1} A_\nu(z;q)$ is the generating function for the total number of columns in all odd Ferrers diagrams in $\A_{\nu,2}(n)$.

On the other hand, from \cite[Theorem 1]{AY} we have
\[
B_\nu(z;q) = \sum_{k=0}^\infty \frac{z^{k+1}q^{k^2+k}}{(q;q^2)_{k+1}}.
\]
Again from Lemma \ref{lem_AnuandAnu2}, if the partition $\pi\in \A_{\nu}(n)$ corresponds to the odd Ferrers diagram $F=(k,\lambda)\in \A_{\nu,2}(n)$, then the number of even parts in $\pi$ is equal to $\ell(\lambda)$.
In terms of generating functions, we have 
\[
\sum_{k=0}^\infty \frac{z^{k+1}q^{k^2+k}}{(q;q^2)_{k+1}} = \sum_{k=0}^\infty (-zq;q^2)zq^k = A_{\nu,2}(z;q).
\]
(This is also \cite[Theorem 2]{AY}.)
Hence $B_\nu(z;q) = A_{\nu,2}(z;q)$ and
so $\left.\frac{\partial}{\partial z} \right|_{z=1} B_\nu(z;q)$ is the generating function for the total number of rows in all odd Ferrers diagrams in $\A_{\nu,2}(n)$.

Combining these, we conclude that $\left.\frac{\partial}{\partial z} \right|_{z=1} (A_\nu(z;q)-B_\nu(z;q))$ is the generating function for the sum of ranks of all odd Ferrers diagrams in $\A_{\nu,2}(n)$.

Given an odd Ferrers diagram $F=(m,\lambda)\in \A_{\nu,2}(n)$, we have $m\geq \ell(\lambda)$ since the parts of $\lambda$ are distinct. Then, $\rank(F)=m-\ell(\lambda)-1\geq -1$ and $\rank(F)=-1$ if and only if $m=\ell(\lambda)$, in which case  $F=(m,(2m-1,2m-3,\ldots,1))$. Hence, there is at most one odd Ferrers diagram with rank $-1$ in $\A_{\nu,2}(n)$. If $\rank(F)=-1$,  the conjugate of $F$ is $F'=(m+1,(2m-1, 2m-3, \ldots, 3))\in \A_{\nu,2}(n)$,  $\rank(F')=(m+1)-m=1$, and thus $\rank (F)+\rank(F')=0$.  Since all  other odd Ferrers diagrams in $\A_{\nu,2}(n)$ have non-negative  rank, it follows that $c(n)\geq 0$.\end{proof}

% \acom{In what follows, we use the notation $A(n \ | \ \textit{conditions })$
% to denote the subset of partitions of $n$ from the set $A(n)$ subject to the given conditions.  The standard notation $|A|$ is used to denote the size of a set $A$.} \ccom{Or "... we use the notation $A(n \mid X)$
% to denote the subset of partitions of $n$ from the set $A(n)$ satisfying condition $X$. ..."}

% \ccom{I think this is the only place where we use this notation. It might make more sense to just rewrite the statement of the theorem. Before the theorem, we can write:

% "

{We end this section by investigating the parity of $c(n)$. To this end, we first prove a result similar to Euler's Pentagonal Number Theorem.} 
Let $a^e_\nu(n)$ (resp. $a^o_\nu(n)$) be the number of partitions in $\mathcal A_\nu$ with an even (resp. odd) number of parts.

%{\acom{need some language to introduce this Theorem/Corollary}}\incom{How about: "We will now deduce an interesting property of the coefficients of the derivative difference studied above. We will need at first the following theorem:".}

\begin{theorem} \label{pent}For any non-negative integer  $n$ we have 
$$a^e_\nu(n)= a^o_\nu(n) +e(n),$$ where $$e(n)=\begin{cases} 1,& \mbox{ if } n=3j^2+2j \mbox{ for some } j\geq 0,\\ -1, & \mbox{ if } n=3j^2+4j+1 \mbox{ for some } j \geq 0,\\ 0, & \mbox{ otherwise.}\end{cases} $$
\end{theorem}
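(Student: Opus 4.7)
Plan: The generating function for $e(n) = |A_\nu(n\mid\mathrm{even})| - |A_\nu(n\mid\mathrm{odd})|$ is exactly $A_\nu(-1;q)$, so the theorem reduces to showing the $q$-series identity
\begin{align*}
A_\nu(-1;q) = \sum_{j \geq 0} q^{3j^2+2j} - \sum_{j \geq 0} q^{3j^2+4j+1}.
\end{align*}
Using the representation $A_\nu(z;q) = \sum_k (-q;q^2)_k z^k q^k$ from Proof~2 of Theorem \ref{thm:nu_partition}, this amounts to $\sum_{k \geq 0}(-q;q^2)_k(-q)^k = \sum_{j \geq 0}\bigl(q^{3j^2+2j} - q^{3j^2+4j+1}\bigr)$. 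I would prove this combinatorially by a size-preserving, sign-reversing involution $\Phi$ on $A_\nu(n)$ whose fixed points are exactly the two families
\[
\pi_j^+ := ((2j+1)^j,\, 2j, 2j-2, \ldots, 2), \qquad \pi_j^- := ((2j+1)^{j+1},\, 2j, 2j-2, \ldots, 2), \qquad j \geq 0.
\]
Writing $\pi \in A_\nu$ uniquely as $(\varepsilon_k, \mu)$ with $\varepsilon_k = (2k, 2k-2, \ldots, 2)$ the staircase of length $k$ and $\mu$ a multiset of odd parts $\leq 2k+1$, these $\pi_j^\pm$ are precisely the partitions with $\mu$ a pure rectangle $((2k+1)^k)$ or $((2k+1)^{k+1})$; a direct check gives $|\pi_j^+| = 3j^2+2j$ with even length $2j$ and $|\pi_j^-| = 3j^2+4j+1$ with odd length $2j+1$, matching the right-hand side term-by-term.

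I would construct $\Phi$ in two stages. Stage~1 exchanges a pair $(1, 2k+1)$ in $\mu$ for a staircase extension: if $\mu$ contains both a $1$ and a $(2k+1)$, remove one of each and append $2k+2$ to the staircase; conversely, if $k \geq 1$ and $(2k+1) \notin \mu$, shorten the staircase by one and deposit a fresh $1$ and $(2k-1)$ into $\mu$. A direct check confirms Stage~1 is $|\pi|$-preserving and flips $\ell(\pi)$ by $\mp 1$, with fixed points $\emptyset$, $(1)$, and exactly those $\pi$ for which $1 \notin \mu$ and $(2k+1) \in \mu$. Stage~2 acts on the remaining non-magic stage-1 fixed points via a Franklin-style swap that trades $k+2$ copies of $(2k+1)$ in $\mu$ (possibly together with strictly smaller odd parts) for a staircase extension plus $k$ copies of $(2k+3)$; iterating this removes any residual imbalance between the multiplicity of $(2k+1)$ and its magic values $k$ or $k+1$, leaving only the rectangular-$\mu$ partitions fixed.

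The main obstacle will be a clean specification of Stage~2 when $\mu$ contains parts strictly less than $2k+1$. The cleanest formulation I envisage passes through the odd-Ferrers-diagram bijection of Lemma \ref{lem_AnuandAnu2}: a Stage~1 fixed point corresponds to $F = (k, \lambda)$ with $\lambda_1 = 2k-1$ and $1 \notin \lambda$, and I define the peeled diagram $F^{(1)} := (k-1, (\lambda \setminus \{2k-1\}) - 2)$; I then recursively apply Stage~1 at this interior scale and reconstruct. The magic partitions $\pi_j^\pm$ are exactly those $F$ whose iterated peeling chain terminates at the base case $(0,\emptyset)$ or $(1,\emptyset)$ without triggering any non-trivial stage-1 move, while all others are paired by the recursion. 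Verifying that the peeling/unpeeling handles the delicate boundary case (where reconstruction might otherwise produce a repeated part in $\lambda$) is the technical heart of the argument, and identifying those very boundaries as the magic configurations is what closes the proof.
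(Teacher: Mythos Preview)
Your setup is correct: the generating-function reduction to $A_\nu(-1;q)$ is right, and your fixed-point candidates $\pi_j^\pm$ coincide exactly with the paper's exceptional set. Stage~1 is a genuine sign-reversing involution with the fixed points you claim. The gap is Stage~2. You give only an outline --- a trade of $k{+}2$ copies of $2k{+}1$ for a staircase step plus $k$ copies of $2k{+}3$, followed by a recursive peeling through odd Ferrers diagrams --- and you yourself flag the boundary behaviour as ``the technical heart of the argument.'' Nothing in the proposal verifies that the peeling/unpeeling are mutually inverse, that the recursion terminates, or that the only configurations surviving every level are the two rectangular families; the Stage-1 fixed locus $\{1\notin\mu,\ (2k{+}1)\in\mu\}$ is large, and you have not actually built an involution on it. So what you have is a plan, not a proof.

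The paper bypasses the two-stage architecture with a single Franklin-type move. Writing $m$ for the multiplicity of $\pi^e_1{+}1$ in $\pi^o$ and $\pi^o_s$ for the smallest odd part, it compares $\pi^o_s$ with $2m{+}1$: if $\pi^o_s\ge 2m{+}1$, delete $\pi^e_1$ from $\pi^e$, strip the last two columns (each of length $m$) from the Ferrers diagram of $\pi^o$, and insert the odd parts $\pi^e_1{-}1$ and $2m{+}1$; if $\pi^o_s< 2m{+}1$, do the reverse. This is already an involution on all of $A_\nu(n)$ minus the exceptional set, changing $\ell(\pi)$ by $\pm1$, with fixed points exactly your $\pi_j^\pm$. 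The statistic you were missing is this direct comparison of the smallest odd part to $2m{+}1$; once you use it, no second stage or recursion is needed.
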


\begin{proof}

We write a  partition $\pi\in A_{\nu}(n)$  as $\pi=(\pi^e,\pi^o)$, where $\pi^e$ (resp.~$\pi^o$) is the partition consisting of the even (resp.~odd) parts of $\pi$. As usual, the largest part of $\pi^e$  is $\pi_1^e$. We denote by $\pi^o_s$ the smallest part of $\pi^o$ and by $m^o(\pi)$ the multiplicity of $\pi^e_1+1$ in $\pi^o$. We have $m^o(\pi)\geq 0$. 

Let $\tilde A_{\nu}(n)=\{\pi\in A_{\nu}(n) \mid \pi^o=(\pi^e_1+1)^{m^o(\pi)}, \mbox{ with } m^o(\pi)\in\{\frac{\pi^e_1}{2}, \frac{\pi^e_1}{2}+1\}\}$. Then, $|\tilde A_{\nu}(n)|=0$ or $1$.

We define an involution  on $A_{\nu}(n)\setminus \tilde A_{\nu}(n)$ as follows. 

(i) If $\pi^o_s \geq 2m^o(\pi)+1$, remove $\pi^e_1$ from $\pi^e$ and the last two columns (of length $m^o(\pi)$) from $\pi^o$, and add parts $\pi^e_1-1$ and $2m^o(\pi) +1$ to $\pi^o$. 

(ii) If $\pi^o_s < 2m^o(\pi)+1$, remove from $\pi^o$ one part equal to $\pi^e_1+1$ (largest part) and one part equal to $\pi^o_s$, and add a part equal to $\pi^e_1+2$ to $\pi^e$ and two columns equal to $\frac{\pi^o_s-1}{2}$. 

Note that the transformations in (i) and (ii) are inverses of each other. 

We have  $|\tilde A_{\nu}(n)|=1$ if and only if $n=3j^2+2j$ or $3j^2+4j+1$. Moreover, for $\pi\in \tilde A_{\nu}(n)$, $j=\ell(\pi^e)=\ell_e(\pi)$  which completely determines the parity of  $\ell(\pi)$. 

\end{proof}
\begin{corollary}\label{cor_pent} Let $n\in \mathbb N$. Then  $c(n)$ is odd if and only if $n$ is eight times a generalized pentagonal number.  \end{corollary}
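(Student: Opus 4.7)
The plan is to reduce the computation of $c(n)\bmod 2$ to a single fixed-point contribution by invoking the involution $\iota$ on $\A_\nu(n)\setminus \tilde\A_\nu(n)$ constructed in the proof of Theorem \ref{pent}. Since $-1\equiv 1\pmod 2$, I would start from
\[
c(n) \;\equiv\; \sum_{\pi\in \A_\nu(n)}\bigl(\ell_o(\pi)+1\bigr)\pmod 2,
\]
and the key step will be showing that $\iota$ preserves the parity of $\ell_o$. In case (i), removing the last two columns of $\pi^o$ (each of height $m^o(\pi)$) merely shrinks the top $m^o$ parts from $\pi^e_1+1$ down to $\pi^e_1-1$, which are still odd, so the number of parts of $\pi^o$ is unchanged; the subsequent insertion of the two new odd parts $\pi^e_1-1$ and $2m^o+1$ then increases $\ell_o$ by exactly $2$. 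Case (ii), being the inverse, decreases $\ell_o$ by $2$. Either way, $\ell_o\bmod 2$ is preserved.

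Consequently each pair $\{\pi,\iota(\pi)\}$ contributes $(\ell_o(\pi)+1)+(\ell_o(\iota(\pi))+1)\equiv 0\pmod 2$, and the sum collapses to its fixed points:
\[
c(n)\;\equiv\;\sum_{\pi\in \tilde\A_\nu(n)}\bigl(\ell_o(\pi)+1\bigr)\pmod 2.
\]
Since $|\tilde\A_\nu(n)|\leq 1$, this is automatically $\equiv 0$ unless $n=3j^2+2j$ or $n=3j^2+4j+1$ for some $j\geq 0$. In the first case the unique element of $\tilde\A_\nu(n)$ has $\pi^o=((2j+1)^j)$, giving $\ell_o(\pi)=j$ and $c(n)\equiv j+1\pmod 2$; in the second, $\pi^o=((2j+1)^{j+1})$, giving $\ell_o(\pi)=j+1$ and $c(n)\equiv j\pmod 2$. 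Hence $c(n)$ is odd exactly when $n=3j^2+2j$ with $j$ even, or $n=3j^2+4j+1$ with $j$ odd.

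To finish, I would identify these two sub-families with $8$ times a generalized pentagonal number $p_k=k(3k-1)/2$. Substituting $j=2\ell$ into $3j^2+2j$ yields $n=12\ell^2+4\ell=8p_{-\ell}$, while substituting $j=2\ell+1$ into $3j^2+4j+1$ yields $n=12\ell^2+20\ell+8=8p_{\ell+1}$. As $\ell$ ranges over $\mathbb{Z}_{\geq 0}$, the indices $-\ell$ and $\ell+1$ sweep out $\mathbb{Z}_{\leq 0}$ and $\mathbb{Z}_{\geq 1}$ respectively, so $k$ covers all of $\mathbb{Z}$ and every $8p_k$ is hit exactly once. The hardest step I foresee is the combinatorial verification that $\iota$ preserves $\ell_o\bmod 2$, since one must carefully track how the column-removal/insertion in case (i) acts on the multiplicities of the parts of $\pi^o$ (and check the edge cases $m^o=0$ and $\pi^o=\emptyset$); once that parity invariant is in hand, the remainder is elementary bookkeeping with the pentagonal parameterization.
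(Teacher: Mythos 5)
Your proof is correct and lands on the same dichotomy as the paper, but it runs through a genuinely different mechanism. The paper never reopens the involution: it observes that for \emph{every} $\pi\in \A_\nu(n)$ one has $\ell_o(\pi)=\ell(\pi^o)\equiv |\pi^o|\equiv n \pmod 2$ (since $|\pi^e|$ is even and a sum of odd numbers has the same parity as the number of summands), so every summand $\ell_o(\pi)-1$ has the fixed parity $n-1$; hence $c(n)$ is even for odd $n$, while for even $n$ one gets $c(n)\equiv |A_\nu(n)| \pmod 2$ and the \emph{statement} of Theorem \ref{pent}, together with the forced parity of $j$ in $n=3j^2+2j$ (even $n$ forces $j$ even) and $n=3j^2+4j+1$ (even $n$ forces $j$ odd), finishes the argument. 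You instead work at the level of the \emph{proof} of Theorem \ref{pent}: you show the involution preserves $\ell_o \bmod 2$ and evaluate the unique exceptional element of $\tilde A_\nu(n)$ directly. Your verification is in fact sound --- in case (i) the two deleted columns do have height exactly $m^o(\pi)$ because all odd parts of a partition in $\A_\nu(n)$ are at most $\pi_1^e+1$, and the edge cases you flag ($m^o(\pi)=0$, $\pi^o=\emptyset$, $\pi^e=\emptyset$) all resolve correctly since $\pi^e=\emptyset$ in case (i) forces $\pi=\emptyset$, which lies in $\tilde A_\nu(0)$ and is excluded --- but note that the step you foresee as hardest evaporates under the paper's observation: the involution preserves the size $n$, and since $\ell_o(\pi)\equiv n\pmod 2$ holds uniformly on $\A_\nu(n)$, parity invariance under the involution is automatic, no tracking of column operations needed. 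What your route buys is explicitness: you compute the fixed-point contribution $\ell_o+1$ on the exceptional element directly rather than via the global parity fact, and your closing parameterization $n=12\ell^2+4\ell=8p_{-\ell}$ and $n=12\ell^2+20\ell+8=8p_{\ell+1}$, with $k=-\ell$ and $k=\ell+1$ jointly covering $\mathbb Z$, spells out the identification with eight times a generalized pentagonal number that the paper asserts without computation. What the paper's route buys is brevity and robustness: two lines of parity bookkeeping using Theorem \ref{pent} purely as a black box, with no case analysis of the involution at all.
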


\begin{proof}  We have $$c(n)=\sum_{\pi\in A_{\nu}(n)}(\ell_o(\pi)-1).$$ 
With the notation in the proof of Theorem \ref{pent},  we have  $\ell_o(\pi)=\ell(\pi^{o})\equiv n \pmod 2$ because the number of parts in a partition with odd parts has the same parity as its size. Therefore, if $n$ is odd, $\ell_o(\pi)-1$ is even for every $\pi\in A_{\nu}(n)$ and $c(n)$ is even. 

If $n$ is even, 
 $c(n) \equiv |A_\nu(n)|\pmod 2$. From Theorem \ref{pent}, it follows that $|A_\nu(n)|\equiv 1\pmod 2$ if and only if $n=3j^2+2j$ or $3j^2+4j+1$ for some $j \geq 0$. Since $n$ is even, if $n=3j^2+2j$, then $j$ must be even, and if  $n=3j^2+4j+1$, then $j$  must be odd. Therefore, $|A_\nu(n)|\equiv 1\pmod 2$ if and only if $n$ is eight times a generalized pentagonal number.
  \end{proof}
 \begin{remark} Theorem 4.3 also follows from setting $a=1$ in Entry 3.7 of \cite{BerndtYee}. Using Lemma \ref{lem_AnuandAnu2}, Theorem \ref{pent} can be adapted to a pentagonal number theorem for $A_{\nu,2}(n)$. %For each partition in  $A_\nu(n)$ the parity of the number of parts is determined by the parity of $n$ and the parity of the number of even parts. 
Then,  \cite[Section 3.2]{LY19} leads to a combinatorial proof of \cite[Theorem 5.4]{ADY}.
\end{remark}

\section{The mock theta function $\phi$}\label{sec_phi}
Recall from Section \ref{sec_mockintro} that the third order mock theta function $\phi$ is defined by
 \begin{align}\label{phi_defn} \phi(q):=\sum_{n=0}^\infty \frac{q^{n^2}}{(-q^2,q^2)_n} = \sum_{n=0}^\infty (sc_e(n) - sc_o(n)) q^n,\end{align} where $sc_{e}(n)$ (resp. $sc_{o}(n)$) counts the number of self-conjugate partitions $\lambda$ of $n$ with $L(\lambda)$ even (resp. odd).  Here, $L(\lambda)$ is the number of parts of $\lambda$ minus the side length of its Durfee square.  
{From \cite[Proof of Theorem 4.2]{ADY}, we have} $$\phi(q)=1+\sum_{n=0}^\infty (-1)^n q^{2n+1}(q;q^2)_n.$$   

We first define the following generalization of $\phi(q)$:
\begin{align*}
    B_{\phi}(z;q) &:= 1 + \sum_{n=0}^\infty z^n q^{2n+1}(q;q^2)_n  =\sum_{n=0}^\infty\sum_{m=0}^\infty b_{\phi}(m,n) z^m q^n,
\end{align*}  where $b_{\phi}(0,0):=1$, and for $(m,n)\neq (0,0)$,
$b_{\phi}(m,n)$ equals the difference between the number of partitions of $n$ into distinct odd parts with largest part $2m+1$ and an odd number of parts, and the number of such partitions with an even number of parts.
Note that $B_{\phi}(-1,q) = \phi(q),$   and that this gives rise to a different combinatorial interpretation for the coefficients of $\phi$ than the one given in \eqref{phi_defn}.  Namely, the coefficient of $q^n$ in the $q$-series expansion for $\phi(q)$ also equals $do_e(n)-do_o(n)$, where $do_e(n)$ (resp.~$do_o(n)$) counts the number of partitions of $n$ into distinct odd parts with $M_2$-rank even (resp.~odd). 

Next we define another bivariate function, which we later explain is  related to $\phi$ when $z=1$ (see
\eqref{phi_ABCD}):

\begin{align}\notag A_\phi(z;q) &:= q\sum_{n=0}^\infty z q^n (-z q^{n+1};q)_n (-z q^{2n+1};q^2)_\infty    =\sum_{n=0}^\infty\sum_{m=0}^\infty a_{\phi}(m,n) z^m q^{n+1},  \\
\end{align}   where $a_\phi(m,n)$ is the number of partitions of $n$ into distinct  parts, {with} $m$ parts, such that each even part is at most twice the smallest part. The function $A_\phi(z;q)$ 
is related to $\phi(q)$ by the following identity:
 \begin{align}\label{phi_ABCD}A_\phi(1;q) =  1-\phi(q) + 2(-q;q^2)_\infty \sum_{n=1}^\infty q^{n^2}.\end{align} 
 Using   Jacobi's triple product identity \cite[(2.2.10) with $z=1$]{AndrewsEncy},
identity \eqref{phi_ABCD}   is essentially \cite[Theorem 4.2]{ADY} with some minor typographical errors corrected. {Unlike the mock theta functions $\omega(q)$ and $\nu(-q)$ studied in Sections \ref{BeckIdentity} and \ref{sec_nusection}, the $q$-series coefficients of $\phi(q)$ are not uniformly non-negative, e.g.,
 $$\phi(q) = 1 + q - q^3 + q^4 + q^5 - q^6 - q^7 + 2 q^9 - 
 2 q^{11} + q^{12} + q^{13} - q^{14} - 2 q^{15} + q^{16} +   O(q^{17}).$$
However, the} authors of \cite{ADY} present \eqref{phi_ABCD}  for $\phi(q)$ as a companion identity to 
 their similar result \cite[Theorem 4.1]{ADY} (see \eqref{eqn_nuqBnu}) which  shows that the   mock theta function $\nu(-q)$   is equal to the generating function for partitions into 
  distinct parts, in which
 each odd part is less than twice the smallest part. Identity~\eqref{phi_ABCD} similarly relates  the mock theta function $\phi(q)$ to the generating function for partitions into distinct parts in which each even part is at most twice the smallest part, but up to a theta function. Indeed it is identity~\eqref{phi_ABCD} that leads to our ``Beck-type" Theorem \ref{thm_phimain} for the mock theta function $\phi(q)$ below.  To state it, we introduce the functions
  \begin{align} \label{def_F1q}
    F_1(q) &:= F_3(q)\Big(1+2 \sum_{n=1}^\infty q^{n^2}\Big) + 2(-q;q^2)_\infty \sum_{n=1}^\infty q^{n^2}, \\ \label{def_F2q} 
    F_2(q) &:= (-q;q^2)_\infty \sum_{m=1}^\infty \frac{q^{2m-1}}{1+q^{2m-1}}, \ \\
  F_3(q) &:= (-q;q^2)_\infty \sum_{m=1}^\infty \frac{q^{2m}}{1+q^{2m}}.
    \end{align} 
The functions $F_2(q)$ and $F_3(q)$, including their combinatorial interpretations, are studied in \cite{WIN5Lehmer}.

{In what follows, we use the notation $G(q) \succeq_{S} 0$, where $S\subseteq \mathbb N$, to mean that when expanded as a $q$-series, the coefficients of $G(q)$ are non-negative, with the exception of the coefficients of $q^{n}$ for $n\in S$. When $S=\emptyset,$ we simply use the notation $\succeq 0$.}

\begin{theorem}\label{thm_phimain}We have \begin{align}\label{eqn_ddAB1Phi} 
 \frac{\partial}{\partial z} \Big |_{z=1} (A_\phi(z;q) + B_{\phi}(-z^{-1};q)) = F_1(q) - F_2(q).\end{align}
    Moreover, we have 
     $$\frac{\partial}{\partial z} \Big |_{z=1} (A_\phi(z;q) + B_{\phi}(-z^{-1};q)) \succeq 0.$$
\end{theorem}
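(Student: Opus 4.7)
The plan is to differentiate each summand of $G(z;q) := A_\phi(z;q) + B_\phi(-z^{-1};q)$ separately, combine them using identity \eqref{phi_ABCD} together with the Jacobi triple product, and then establish the non-negativity directly from the resulting closed form. For $B_\phi(-z^{-1};q) = 1 + \sum_{n\geq 0}(-1)^n z^{-n}q^{2n+1}(q;q^2)_n$, term-by-term differentiation yields
\[
\frac{\partial}{\partial z}B_\phi(-z^{-1};q)\Big|_{z=1} = \sum_{n\geq 1}(-1)^{n+1}n\,q^{2n+1}(q;q^2)_n.
\]
For $A_\phi(z;q) = q\sum_{n\geq 0}zq^n(-zq^{n+1};q)_n(-zq^{2n+1};q^2)_\infty$, logarithmic differentiation of each summand and evaluation at $z=1$ produce
\[
\frac{\partial}{\partial z}A_\phi(z;q)\Big|_{z=1} = A_\phi(1;q) + L(q),
\]
where
\[
L(q) := \sum_{n\geq 0}q^{n+1}(-q^{n+1};q)_n(-q^{2n+1};q^2)_\infty\Bigl(\sum_{j=n+1}^{2n}\frac{q^j}{1+q^j} + \sum_{k\geq n}\frac{q^{2k+1}}{1+q^{2k+1}}\Bigr).
\]
Since $\phi(q) = B_\phi(-1;q)$, identity \eqref{phi_ABCD} rewrites $A_\phi(1;q) = \sum_{n\geq 0}(-1)^{n+1}q^{2n+1}(q;q^2)_n + 2(-q;q^2)_\infty\sum_{n\geq 1}q^{n^2}$, so
\[
A_\phi(1;q) + \frac{\partial}{\partial z}B_\phi(-z^{-1};q)\Big|_{z=1} = \sum_{n\geq 0}(-1)^{n+1}(n+1)q^{2n+1}(q;q^2)_n + 2(-q;q^2)_\infty\sum_{n\geq 1}q^{n^2}.
\]

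To match $F_1(q) - F_2(q)$, the next step is to factor $(-q;q^2)_\infty$ out of $L(q)$ using the product identity $(-q^{n+1};q)_n(-q^{2n+1};q^2)_\infty = (-q;q^2)_\infty(-q^2;q^2)_n/(-q;q)_n$, obtained by splitting the $q$-Pochhammer symbols into odd and even factors. Reversing the order of summation,
\[
\sum_{n\geq 0}c_n\sum_{k\geq n}\frac{q^{2k+1}}{1+q^{2k+1}} = \sum_{k\geq 0}\frac{q^{2k+1}}{1+q^{2k+1}}\sum_{n=0}^{k}c_n,
\]
converts the tail Lambert sum into the global Lambert series $\sum_{m\geq 1}q^{2m-1}/(1+q^{2m-1})$ appearing in $F_2(q)$, while an analogous manipulation on the finite inner sum $\sum_{j=n+1}^{2n}q^j/(1+q^j)$ isolates the even-Lambert series $\sum_{m\geq 1}q^{2m}/(1+q^{2m})$ of $F_3(q)$. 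The Jacobi triple product $1 + 2\sum_{n\geq 1}q^{n^2} = (q^2;q^2)_\infty(-q;q^2)_\infty^2$ is then invoked to recognize the theta factor multiplying $F_3(q)$ in the definition of $F_1(q)$. The main obstacle is the intricate bookkeeping required to verify that the residual signed series $\sum_{n\geq 0}(-1)^{n+1}(n+1)q^{2n+1}(q;q^2)_n$ is exactly absorbed by the portions of the rearranged $L(q)$ not needed to produce $F_1(q) - F_2(q)$, a cancellation we expect to follow from $q$-series manipulations in the spirit of \cite[Section 4]{ADY}.

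For the non-negativity, note that $\frac{\partial A_\phi}{\partial z}|_{z=1}$ is manifestly $\succeq 0$ as the generating function for the total number of parts in all partitions counted by $a_\phi(m,n)$, but the $B_\phi$ contribution is a signed sum, so the combined non-negativity is non-trivial. I would therefore work from the right-hand side, writing
\[
F_1(q) - F_2(q) = \bigl(F_3(q) - F_2(q)\bigr) + 2\sum_{n\geq 1}q^{n^2}\bigl(F_3(q) + (-q;q^2)_\infty\bigr),
\]
where the second summand is $\succeq 0$ provided $F_3(q) + (-q;q^2)_\infty \succeq 0$. For the remaining difference $F_3(q) - F_2(q)$, whose coefficients can be negative, I would use the combinatorial interpretations of $F_2(q)$ and $F_3(q)$ from \cite{WIN5Lehmer} together with the theta-weighted positive summand to construct an injection witnessing that the positive contributions dominate the negative ones. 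Constructing this injection, which must carefully pair sign-canceling configurations of partitions into distinct odd parts with the square-weighted structures from the theta factor, is expected to be the hardest part of the proof.
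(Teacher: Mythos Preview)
Your outline differs substantially from the paper's argument, and the two crucial steps you defer are precisely where the work lies.

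For the identity \eqref{eqn_ddAB1Phi}, the paper does \emph{not} differentiate $A_\phi$ and $B_\phi$ separately and then attempt to recombine.  Instead it first rewrites $B_\phi(-z^{-1};q)$ via \cite[(2.4)]{LY19} as $1+\sum_{n\geq 0}z^{-n}q^{(n+1)^2}/(-z^{-1}q^2;q^2)_{n+1}$, and then invokes a two-variable identity of Berndt--Dixit--Gupta \cite[Theorem 6.11]{BDG} to obtain a closed product-and-theta form
\[
A_\phi(z;q)+B_\phi(-z^{-1};q)=1-z(-zq;q^2)_\infty+z\,\frac{(-q;q)_\infty}{(-q^2/z;q^2)_\infty}\Bigl(1+\sum_{n\geq 1}(z^n+z^{-n})q^{n^2}\Bigr),
\]
valid for all $z$.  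Differentiating this single expression at $z=1$ yields $F_1(q)-F_2(q)$ by a short direct calculation.  Your route---computing $\partial_z A_\phi|_{z=1}=A_\phi(1;q)+L(q)$ and then trying to make $L(q)$ together with the signed series $\sum(-1)^{n+1}(n+1)q^{2n+1}(q;q^2)_n$ collapse to $F_1-F_2-2(-q;q^2)_\infty\sum q^{n^2}$---is not wrong in spirit, but the ``intricate bookkeeping'' and ``expected cancellation'' you flag are not verified.  Carrying them out would essentially amount to reproving the \cite{BDG} identity term by term, and your sketch of reversing the order of summation does not explain how the partial sums $\sum_{n=0}^{k}q^{n+1}(-q^2;q^2)_n/(-q;q)_n$ produce anything tractable.

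For the non-negativity, your decomposition $F_1-F_2=(F_3-F_2)+2\sum_{n\geq 1}q^{n^2}\bigl(F_3+(-q;q^2)_\infty\bigr)$ is algebraically valid, but the promised injection handling $F_3-F_2$ is not constructed.  The paper uses a different splitting,
\[
F_1-F_2=\bigl(F_1-2F_3\textstyle\sum q^{n^2}\bigr)+(2F_3-F_2)\textstyle\sum q^{n^2}+\bigl(F_2\textstyle\sum q^{n^2}-F_2\bigr),
\]
and each of the last two pieces requires substantial work: showing $2F_3-F_2\succeq_S 0$ with coefficients eventually at least $4$ (Proposition~\ref{prop_as}, proved by a lengthy multiset injection), and showing the $F_2$-coefficient inequality $b_n\leq b_{n-1}+b_{n-4}$ for $n\geq 9$ (Proposition~\ref{prop_bs}, with both combinatorial and analytic proofs).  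Even with these, one only gets $\succeq_W 0$ for a finite exceptional set $W$ that must then be checked by direct computation.  Your sketch is at the right level of difficulty in spirit, but as written it contains no argument for the hardest part.
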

{{\begin{remark}  A combinatorial interpretation of   \eqref{eqn_ddAB1Phi} in Theorem \ref{thm_phimain} can be deduced from the combinatorial definitions of $a_\phi(m,n)$ and $b_{\phi}(m,n)$ provided above, together with combinatorial interpretations of {the} $q$-series coefficients  {in} the functions $F_2(q)$ and $F_3(q)$ provided in \cite{WIN5Lehmer}, and the definition of $F_1(q)$.   While this combinatorial interpretation involves partition differences, Theorem \ref{thm_phimain}  establishes the non-negativity of the $q$-series coefficients of \eqref{eqn_ddAB1Phi}.  On the other hand, it is of interest to find another proof of this fact by finding a different and \emph{manifestly} positive combinatorial interpretation of the $q$-series coefficients of $F_1(q)-F_2(q)$ (i.e., one which does not involve a combinatorial difference).  We leave this as an open problem. \end{remark}
}}

\subsection{Proof of Theorem \ref{thm_phimain}}
In this section, we prove Theorem \ref{thm_phimain}, assuming the truth of Proposition \ref{prop_as} and Proposition \ref{prop_bs} stated below.  We provide a combinatorial proof of Proposition \ref{prop_as} in  Section \ref{sec_proofpropas}, and provide both   combinatorial and analytic proofs of Proposition \ref{prop_bs} in Section \ref{sec_proofpropbs}. 
 
 \begin{proposition} \label{prop_as} We have  $$2 F_3(q) - F_2(q) \succeq_{S} 0,  $$ where $S:=\{1,4,8,16\}.$  Moreover, the coefficients of $2 F_3(q) - F_2(q)$ are at least 4, with the exception of the coefficients of $q^n$ with $n$ in the set $U:=\{1,2,3,4,5,8, 9, 12, 13,16,17\}$.  
 
    \end{proposition}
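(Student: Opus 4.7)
The plan is to convert the statement into an explicit coefficient formula, then split the argument into a finite check for small $n$ and a tail bound for large $n$.  First, I would factor
$$2F_3(q)-F_2(q) \;=\; (-q;q^2)_\infty \cdot H(q), \qquad H(q) := \sum_{k\geq 1} c_k\,\frac{q^k}{1+q^k},$$
with $c_k := 2$ for $k$ even and $c_k := -1$ for $k$ odd.  Expanding each $\frac{q^k}{1+q^k}=\sum_{j\geq 1}(-1)^{j-1}q^{jk}$ and collecting by exponent gives the divisor formula
$$[q^n]H(q) \;=\; \sum_{d\mid n} c_d\,(-1)^{n/d-1}.$$
Since $(-q;q^2)_\infty=\sum_{m\geq 0} q_o(m)q^m$, where $q_o(m)$ counts partitions of $m$ into distinct odd parts, this yields the convolution
$$[q^n]\bigl(2F_3(q)-F_2(q)\bigr) \;=\; \sum_{k=1}^{n}\,[q^k]H(q)\cdot q_o(n-k).$$
A direct calculation with the divisor formula reveals the key structural inputs: $[q^{4j+2}]H(q)=3\,d(2j+1)\geq 3$ for every $j\geq 0$, while $[q^k]H(q)=-d(k)$ for odd $k$, and in general $|[q^k]H(q)|\leq 2\,d(k)$.

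Next, I would verify both assertions of the proposition directly for all $n\leq 17$ by tabulating $q_o(m)$ and $[q^k]H(q)$ for small arguments and evaluating the convolution; this finite calculation simultaneously handles every exceptional value (note $S\subset U$), confirming for example that the coefficients at $n=9$ and $n=12$ equal $0$ and that the coefficient at $n=17$ is exactly $4$.

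For $n\geq 18$ the task is the uniform lower bound $[q^n](2F_3-F_2)\geq 4$.  My strategy is to isolate the large positive block coming from $k\equiv 2\pmod 4$,
$$P(n) \;:=\; \sum_{\substack{j\geq 0\\ 4j+2\leq n}} 3\,d(2j+1)\,q_o(n-4j-2),$$
and to bound the remaining terms.  Since $q_o$ is non-decreasing on $\{3,4,5,\ldots\}$, its first few indices support the pairing $3\,q_o(n-2)-q_o(n-1)-2\,q_o(n-3)\geq 0$ (which absorbs the odd-$k$ contributions from $k=1,3$), while the further positive contributions $6\,q_o(n-6)+6\,q_o(n-10)+\cdots$ should dominate the remaining negative contributions from odd $k\geq 5$ and the smaller terms from $k\equiv 0\pmod 4$ once $n\geq 18$, via a case analysis on $n\bmod 4$ combined with crude bounds using $|[q^k]H(q)|\leq 2\,d(k)$.

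The main obstacle will be this last step.  The alternating signs of $[q^k]H(q)$ and the constancy of $q_o$ on several intervals make the uniform bound of $4$ quite sharp --- indeed numerically $[q^{20}](2F_3-F_2)$ equals exactly $5$ --- so a clean derivation likely requires either (a) a sign-reversing involution grounded in the combinatorial interpretations of $F_2$ and $F_3$ given in \cite{WIN5Lehmer}, which would exhibit four explicit surviving combinatorial objects contributing to each coefficient $[q^n](2F_3-F_2)$ for $n\geq 18$, or (b) a sharp quantitative version of the monotonicity of $q_o$ together with an explicit tail bound of the form $\bigl|\sum_{k>K}[q^k]H(q)\cdot q_o(n-k)\bigr|\leq C\,q_o(n-K)$ for suitable constants $C$ and $K$.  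Identifying the right pairing --- or, equivalently, the right involution --- is the crux of the proof.
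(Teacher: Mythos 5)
Your analytic reformulation is attractive and genuinely different from the paper's argument, and its structural claims check out: writing $2F_3(q)-F_2(q)=(-q;q^2)_\infty H(q)$ with $[q^n]H(q)=\sum_{d\mid n}c_d(-1)^{n/d-1}$ is correct, and indeed $[q^{4j+2}]H(q)=3d(2j+1)$, $[q^k]H(q)=-d(k)$ for odd $k$, and more precisely $[q^{2^am}]H(q)=(5-2a)d(m)$ for $a\geq 2$ and $m$ odd, which gives your bound $|[q^k]H(q)|\leq 2d(k)$. (The paper instead proves the proposition combinatorially: using the interpretations of $F_2$ and $F_3$ from \cite{WIN5Lehmer} as counting pairs $(\lambda,(a))$ and $(\lambda,(c^d))$ with $\lambda\in\mathcal Q_o$, it constructs an explicit at-most-two-to-one map $\psi$ and exhibits enough elements of $B(n)$ outside the image to get $2|B(n)|\geq |A(n)|+6$ for $n\geq 45$, with a direct check below $45$.) However, your proposal is not a proof: by your own admission the uniform lower bound for large $n$ --- the step carrying all of the difficulty --- is left as ``should dominate,'' with two candidate strategies named but neither executed. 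Your option (a) essentially points back to the paper's injection argument, which is precisely the part of the paper's proof that requires many pages of case analysis.

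Moreover, the concrete ingredients you do commit to are faulty. The pairing inequality $3q_o(n-2)-q_o(n-1)-2q_o(n-3)\geq 0$ is false: at $n=24$ one has $q_o(21)=8$, $q_o(22)=8$, $q_o(23)=9$, giving $24-9-16=-1$. Monotonicity of $q_o$ controls only first differences, while this inequality needs $2(q_o(n-2)-q_o(n-3))\geq q_o(n-1)-q_o(n-2)$, a second-difference statement that parity oscillations in $q_o$ can violate. Similarly, term-by-term pairing of an odd index $k$ against a neighboring index $k'\equiv 2\pmod 4$ cannot close the tail estimate, since $d(k)$ can equal or exceed $3d(k'/2)$ while $q_o(n-k)\geq q_o(n-k')$ for $k<k'$ (e.g., $k=45$, $k'=46$ gives $-6q_o(n-45)+6q_o(n-46)\leq 0$); any successful argument must exploit the growth of $q_o$ globally, not locally. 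Finally, your numerical anchors indicate the ``direct calculation'' was not actually performed: $[q^{17}](2F_3(q)-F_2(q))=3$, not $4$ (harmless, since $17\in U$), and $[q^{20}]=6$, not $5$. The cutoff $n\geq 18$ is also optimistic --- coefficients hover near the bound well past $n=17$, and the paper itself verifies directly through $n=44$ and proves the surplus only for $n\geq 45$ --- so even a repaired tail bound would likely force a substantially longer finite check. As it stands, the crux of the proposition is unproven in your approach.
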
 
    
\begin{corollary}\label{cor_p1}  We have 
    \begin{align}\label{def_sand2} (2F_3(q) - F_2(q))\sum_{n=1}^\infty q^{n^2} \succeq_T 0,\end{align} where $T:=\{2,5,9,13,17\}$.
\end{corollary}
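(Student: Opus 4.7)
The plan is to read off the $q$-coefficients of the product directly. Writing $G(q) := 2F_3(q) - F_2(q) = \sum_{n \geq 0} g_n q^n$, the coefficient of $q^N$ in $G(q)\sum_{n \geq 1}q^{n^2}$ equals
\[
c_N \;:=\; \sum_{\substack{k \geq 1 \\ k^2 \leq N}} g_{N-k^2}.
\]
By Proposition \ref{prop_as}, $g_n \geq 0$ for $n \notin S = \{1,4,8,16\}$ and $g_n \geq 4$ for $n \notin U = \{1,2,3,4,5,8,9,12,13,16,17\}$. Thus a summand $g_{N-k^2}$ can be negative only when $N - k^2 \in S$, and it is at least $4$ whenever $N - k^2 \notin U$.

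The first step is to verify that $T$ is exactly the set of $N \geq 1$ for which $\{N - k^2 : 1 \leq k^2 \leq N\} \subseteq U$. For each $N \in T = \{2,5,9,13,17\}$ this is a direct check of at most four values, and for each $N \notin T$ one exhibits a $k_*$ with $N - k_*^2 \notin U$, yielding a contribution $g_{N-k_*^2} \geq 4$ in $c_N$. Since every $n \geq 18$ lies outside $U$, the choice $k_* = 1$ already works for all $N \geq 19$, which reduces the identification of $T$ to finitely many small $N$.

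The second step is to bound the four potentially negative coefficients $g_1$, $g_4$, $g_8$, $g_{16}$ from below. Expanding the Lambert-type series for $F_2(q)$ and $F_3(q)$ as sums over divisors with signs, and convolving with $(-q;q^2)_\infty$, gives explicit integer values for these coefficients and hence an explicit lower bound $-C$. Since at most $|S| = 4$ summands in $c_N$ can be negative, the total negative contribution is at least $-4C$. The third step then combines these ingredients: for $N \notin T$ beyond an explicit threshold, the number of $k$ with $N - k^2 \geq 18$ grows like $\sqrt{N}$, each producing a summand $\geq 4$, which eventually dominates $4C$; the finitely many remaining small $N \notin T$ are handled by hand using the explicit values of $g_1, g_4, g_8, g_{16}$.

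The main obstacle is the quantitative bookkeeping at the ``multi-bad'' $N$'s, where several elements of $S$ appear simultaneously as $N - k^2$. For example, $N = 20$ produces both $N - 4 = 16 \in S$ and $N - 16 = 4 \in S$, and the verification here relies crucially on the explicit bounds from the second step to confirm that the accompanying ``good'' indices (contributing $g_{19}, g_{11} \geq 4$) outweigh the two negative contributions. Once these finitely many critical cases are cleared, the inequality $(2F_3 - F_2)\sum_{n \geq 1}q^{n^2} \succeq_T 0$ follows.
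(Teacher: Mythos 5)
Your step 1 is wrong as stated, and it is the one genuinely false claim in the proposal. The set $T$ is \emph{not} ``exactly the set of $N$ for which $\{N-k^2 : 1\leq k^2\leq N\}\subseteq U$.'' Concretely, for $N=3$ the only summand index is $3-1=2\in U$, and for $N=6$ the indices are $\{5,2\}\subseteq U$, yet $3,6\notin T$; there your promised $k_*$ with $N-k_*^2\notin U$ and $g_{N-k_*^2}\geq 4$ simply does not exist. Non-negativity at such $N$ holds for a different reason: no index lands in $S=\{1,4,8,16\}$, so every summand is already $\geq 0$. Conversely, if you allow $k^2=N$, then $N=9$ has the index $0\notin U$, which by your criterion would place $9\notin T$ — but $g_0=0$ (the series has no constant term), not $\geq 4$, and indeed $9\in T$. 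The error is a conflation of $U$ (indices where coefficients may be smaller than $4$) with $S$ (indices where coefficients may be negative): membership in the exception set is governed by whether some $N-k^2$ lies in $S$ without compensation, not by whether all indices lie in $U$. Note also that the corollary only asserts $c_N\geq 0$ for $N\notin T$; no characterization of $T$ is needed, so the cleanest repair is to delete step 1 entirely and verify the finitely many $N$ below your threshold by direct computation, splitting off the trivial cases where $\{k: N-k^2\in S\}=\emptyset$.

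With that repair, your steps 2 and 3 do constitute a valid proof, and they differ in an interesting way from the paper's. The paper computes the $q$-series directly to $O(q^{65})$ and, for $n>65$, pairs each bad index $k\in S$ with $k+m^2=n$ injectively with $k':=k+2m-1$, which satisfies $k'+(m-1)^2=n$ and $k'\notin U$ (since $k\in S$, $k'\in U$ would force $n=k+\bigl(\frac{k'-k+1}{2}\bigr)^2\leq 65$), so that $a_k+a_{k'}\geq -4+4\geq 0$ using the computed bound $\min\{a_k\}_{k\in S}=-4$. Your version replaces this surgical pairing with a cruder count: at most $|S|=4$ negative summands, each bounded below by $-C$ (with $C=4$ from the same computation), against $\lfloor\sqrt{N-18}\rfloor$ summands that are $\geq 4$ because every index $\geq 18$ lies outside $U$; this settles all $N\geq 34$ and leaves a shorter finite check than the paper's. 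The pairing argument buys a smaller ``slack'' per bad index (one good partner suffices, which is why it extends to situations where good indices are sparse); your counting argument buys simplicity and a smaller computational tail, at the cost of the explicit coefficient bounds you must compute in step 2 — which the paper needs anyway.
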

\begin{proof}[Proof of Corollary \ref{cor_p1}]
    Writing $2F_3(q) - F_2(q) =: \sum_{n=1}^\infty a_n q^n,$ we have the coefficient of $q^n$ in $(2F_3(q) - F_2(q))\sum_{j=1}^\infty q^{j^2}$ is equal to
    \begin{align}\label{eqn_sumak} \mathop{\sum_{k=1}^{n-1} a_{k}}_{k+m^2=n, \ 1\leq m^2 \leq n-1}.\end{align}  
    By Proposition \ref{prop_as},  we have $a_k\geq 0$ for any $k\not\in S = \{1,4,8,16\}$, and $a_k\geq 4$ for any $k\not\in U$. 
    
Let $n\notin T$ be a positive integer. 
We can directly compute the $q$-series in \eqref{def_sand2} to $O(q^{65})$ to see the coefficients are non-negative. (Here and throughout we use standard ``big-$O$," also called ``big-oh," notation; see e.g.\ \cite{Apostol}.)
Now assume that $n>65$.
To prove that \eqref{eqn_sumak} is non-negative, it suffices to show that for any $k\in S$ with $k+m^2=n$, there is another $k'\neq k$ with $k'+m'^2=n$ such that $a_{k'} + a_k \geq 0$. 
Because $n>65$, we must have $m\geq2$.
Let $k' :=k+2m-1$. Then $1\leq k' \leq n-1$, $k'>k$, 
     and $k'+(m-1)^2=n$, where $1\leq (m-1)^2 \leq n-1$.    
Note that $k' \not\in U$.
This is because for $k\in S$ and $k'\in U$, we have $n=k+m^2 = k+(\frac{k'-k+1}2)^2\leq 65$. 
By direct calculation, we find that $\min\{a_k\}_{k\in S} = -4,$  and hence  $a_k + a_{k'} \geq 0$. 
\end{proof}

\begin{proposition}\label{prop_bs} {Let $F_2(q) =: \sum_{n=1}^\infty b_n q^n$}.  For $n\geq 9$, we have $$b_n \leq b_{n-1} + b_{n-4}.$$  
\end{proposition}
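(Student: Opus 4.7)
The plan is first to extract a clean combinatorial interpretation of $b_n$, and then attack the inequality by a length-preserving injection (for the combinatorial proof) and by a generating-function manipulation (for the analytic proof).

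Starting from the definition, observe that
$(-q;q^2)_\infty \cdot \frac{q^{2m-1}}{1+q^{2m-1}} = q^{2m-1}\prod_{k\neq m}(1+q^{2k-1})$
is the generating function for partitions of $n$ into distinct odd parts that contain $2m-1$. Summing over $m\ge 1$ counts each $\lambda\in\mathcal{DO}(n)$ with multiplicity equal to its number of parts. Hence
\[
b_n=\sum_{\lambda\in\mathcal{DO}(n)}\ell(\lambda)=|\mathcal{P}_n|,\qquad \mathcal{P}_n:=\{(\lambda,p):\lambda\in\mathcal{DO}(n),\ p\in\lambda\},
\]
and the claim becomes $|\mathcal{P}_n|\le|\mathcal{P}_{n-1}|+|\mathcal{P}_{n-4}|$ for $n\ge 9$.

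For the combinatorial proof, I would construct an injection $\Phi:\mathcal{P}_n\hookrightarrow\mathcal{P}_{n-1}\sqcup \mathcal{P}_{n-4}$ by cases. Case~I: if $1\in\lambda$ and the distinguished part $p$ satisfies $p\neq 1$, set $\Phi(\lambda,p):=(\lambda\setminus\{1\},p)\in\mathcal{P}_{n-1}$; this is clearly injective and its image is exactly $\{(\mu,p)\in\mathcal{P}_{n-1}:1\notin\mu\}$. Case~II: the remaining inputs are those with $p=1$ (so $1\in\lambda$) or $1\notin\lambda$; in both situations the parts of $\lambda$ other than a possible $1$ are distinct odd integers $\ge 3$, and there is enough ``room'' to reduce $|\lambda|$ by $4$. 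A candidate rule is: if $p=1$, delete the $1$ and decrement the next smallest part by $3$, declaring the new value its distinguished part; if $1\notin\lambda$, decrement the smallest part by $4$ (or split the decrement between two parts), keeping $p$ as the distinguished part. The output is then an element of $\mathcal{P}_{n-4}$.

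For the analytic proof, I would instead show directly that $(1-q-q^4)F_2(q)\preceq 0$ beyond $q^8$. Writing
\[
(1-q-q^4)F_2(q)=(-q;q^2)_\infty\sum_{m\ge 1}\frac{q^{2m-1}-q^{2m}-q^{2m+3}}{1+q^{2m-1}},
\]
one can re-expand each summand (e.g.\ using $\frac{q^{2m-1}}{1+q^{2m-1}}=\sum_{j\ge 1}(-1)^{j-1}q^{j(2m-1)}$) and regroup, then combine with the expansion of $(-q;q^2)_\infty$ to show the coefficient of $q^n$ is non-positive for $n\ge 9$. A short direct check handles the finite list of coefficients $n\le 8$, securing the cutoff in the statement.

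The main obstacle will be Case~II of the combinatorial injection: naive decrements may cause the resulting part to collide with an existing part of $\lambda$, violating the distinct-odd condition, so one must either split Case~II into several sub-cases with bespoke rules or introduce a Garsia--Milne-type involution to resolve the collisions. The hypothesis $n\ge 9$ enters precisely to guarantee enough flexibility (e.g.\ to ensure a second part $\ge 3$ exists when $1\in\lambda$) for the injection to be well defined.
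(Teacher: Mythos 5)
Your combinatorial interpretation of $b_n$ as the total number of parts in all partitions of $n$ into distinct odd parts is correct and matches the paper's starting point, and your overall architecture --- remove a $1$ to land in weight $n-1$, subtract $4$ to land in weight $n-4$, then deal with the leftover pairs --- is exactly the skeleton of the paper's combinatorial proof. But there is a genuine gap precisely where you flag it: Case~II is not an argument, and the candidate rules you offer are broken, not merely collision-prone. If $p=1$, the rule ``delete the $1$ and decrement the next smallest part by $3$'' turns an odd part into an even one, so the output is not a partition into distinct odd parts at all; if $1\notin\lambda$, ``decrement the smallest part by $4$'' fails whenever the smallest part is $3$, and the one-part case $\lambda=(n)$ needs its own rule. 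The paper's fix for the $1\notin\lambda$ case is to subtract $2$ from each of the two smallest parts (with $(n)\mapsto(n-4)$), which is automatically parity- and distinctness-safe and whose image is characterized by the gap condition $\xi_{\ell(\xi)-2}-\xi_{\ell(\xi)-1}>2$; the genuinely hard part, which your sketch omits entirely, is the injection handling the pairs with distinguished part $1$: the paper sends $\lambda\ni 1$ with second-smallest part $a\geq 9$ to $\lambda\setminus\{a\}\cup\mu(a-1)$ (splitting $a-1$ into two distinct odd parts), with separate bespoke constructions for $a\in\{3,5,7\}$ and for $\ell(\lambda)=3$, targeting exactly the marked partitions left uncovered by the first two maps. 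That case analysis is the substance of the proof, so deferring it to unspecified ``bespoke rules or a Garsia--Milne-type involution'' leaves the proposition unproved.

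The analytic half has the same problem. Rewriting $(1-q-q^4)F_2(q)$ as a sum over $m$ and expanding $\frac{q^{2m-1}}{1+q^{2m-1}}$ as an alternating geometric series gives no sign control after regrouping: the terms genuinely alternate, and the required cancellation is delicate. The paper's analytic proof needs a dedicated lemma showing, for each $m$, that $(q^4+q-1)\,q^{2m-1}\prod_{\ell\neq m}(1+q^{2\ell-1})=f_m(q)+g_m(q)$ with explicit polynomials $f_m$ (different for $m=1,2,3$ and for $m\geq 4$) and $g_m(q)\succeq 0$, the non-negativity being established by pairing each negative term with a specific positive term via shift identities such as matching $q^{a}\sum_{k\geq i+j}$ against $q^{a+2j}\sum_{k\geq i}$; the exceptional small coefficients then fall out of $\sum_m f_m(q)$, not from a finite check alone. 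So while your reduction to $|\mathcal P_n|\leq|\mathcal P_{n-1}|+|\mathcal P_{n-4}|$ is sound and both of your proposed routes point in the same direction as the paper's two proofs, neither is carried far enough, and the concrete rules you do state would need to be repaired before either route could close.
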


\begin{corollary}\label{cor_p2}  We have 
 \begin{align}\label{def_sand3} F_2(q) \sum_{n=1}^\infty q^{n^2} - F_2(q)  \succeq_V 0,
    \end{align}  where $V:=\{1,3,4,6,8\}$. 
\end{corollary}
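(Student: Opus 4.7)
The plan is to compare $q$-series coefficients on both sides and reduce everything to Proposition \ref{prop_bs}. Writing $F_2(q) = \sum_{n \geq 1} b_n q^n$, the coefficient of $q^N$ in $F_2(q)\sum_{n \geq 1} q^{n^2} - F_2(q)$ equals
\[
c_N \;:=\; \sum_{\substack{n \geq 1 \\ n^2 \leq N-1}} b_{N - n^2} \;-\; b_N,
\]
so the task is to show $c_N \geq 0$ whenever $N \notin V = \{1,3,4,6,8\}$.

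I would split the argument into a generic range and a finite exceptional list. For $N \geq 9$, both indices $n=1$ and $n=2$ satisfy $n^2 \leq N-1$, so the sum contains $b_{N-1}$ and $b_{N-4}$, while the remaining tail $\sum_{n \geq 3,\, n^2 \leq N-1} b_{N-n^2}$ is non-negative provided each $b_k \geq 0$. Granting this, Proposition \ref{prop_bs} immediately delivers
\[
c_N \;\geq\; b_{N-1} + b_{N-4} - b_N \;\geq\; 0
\]
for all $N \geq 9$. For the small values $N \in \{2,5,7\}$, I would verify $c_N \geq 0$ by directly expanding $F_2(q) = (-q;q^2)_\infty \sum_{m\geq 1} q^{2m-1}/(1+q^{2m-1})$ through order $q^7$. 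Multiplying the partial expansion of $(-q;q^2)_\infty$ by the Lambert-type series $\sum_{m\geq 1} q^{2m-1}/(1+q^{2m-1})$ modulo $q^8$ yields $(b_1, b_2, \ldots, b_7) = (1,0,1,2,1,2,1)$, from which $c_2 = b_1 - b_2 = 1$, $c_5 = b_4 + b_1 - b_5 = 2$, and $c_7 = b_6 + b_3 - b_7 = 2$, all non-negative.

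The main obstacle is the auxiliary statement that the individual coefficients $b_n$ are themselves non-negative, which is exactly what licenses discarding the tail $\sum_{n \geq 3} b_{N-n^2} \geq 0$ in the generic step. This is not transparent from the analytic expression, since $\sum_{m\geq 1} q^{2m-1}/(1+q^{2m-1})$ has coefficients of both signs; however, it should follow from the combinatorial interpretation of $F_2(q)$ given in \cite{WIN5Lehmer}, or alternatively by a direct partition-theoretic counting of the product. Once $b_n \geq 0$ is in hand, the reduction to Proposition \ref{prop_bs} is immediate and the corollary follows.
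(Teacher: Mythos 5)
Your proposal is correct and takes essentially the same approach as the paper: it reduces the claim to the coefficient inequality $\sum_{1\leq m^2\leq n-1} b_{n-m^2}\geq b_n$, applies Proposition \ref{prop_bs} together with the non-negativity of $b_n$ to handle all $n\geq 9$ via $b_{n-1}+b_{n-4}\geq b_n$, and verifies $n\in\{2,5,7\}$ by direct computation. The non-negativity of $b_n$ that you flag as the remaining obstacle is exactly what the paper invokes, justified by the combinatorial interpretation from \cite{WIN5Lehmer} that $b_n$ counts the total number of parts in all partitions of $n$ into distinct odd parts, so your argument is complete as reasoned.
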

\begin{proof}[Proof of Corollary \ref{cor_p2}]  To prove \eqref{def_sand3}, we show \begin{align}\label{eqn_sumbk} \sum_{1\leq m^2\leq n-1}  b_{n-m^2} \geq b_n\end{align} for $n \in \mathbb N \setminus V$. By Proposition \ref{prop_bs}, and the non-negativity of $b_n$, for $n\geq 9$, we have $$\sum_{1\leq m^2\leq n-1}  b_{n-m^2} \geq  b_{n-1}+b_{n-4} \geq b_n.$$ For $n\in\{2,5,7\}$, the inequality \eqref{eqn_sumbk}  can be verified directly.
\end{proof}

\subsubsection{Proof of Theorem \ref{thm_phimain}}\label{sec_proofphimain}
First, by straightforward manipulations, we find that \cite[(2.4)]{LY19} leads to the identity    
 \begin{align}\label{eqn_bphiid} B_\phi(-z^{-1};q) = 1 + \sum_{n=0}^\infty \frac{z^{-n} q^{(n+1)^2}}{(-z^{-1}q^2;q^2)_{n+1}}.\end{align} {We recall \cite[Theorem 6.11]{BDG}, which gives the interesting identity
\begin{align}\label{eqn_bdg611}  
\sum_{n=0}^\infty &q^n (-z q^n;q)_{n+1}(-z q^{2n+2};q^2)_\infty \\ \notag
&=-\frac{q}{z}\nu\left(\frac{q^2}{z},-\frac{q^2}{z};-q\right) + \frac{1}{q}(-z;q^2)_\infty\left(-1+\frac{(-q;q)_\infty (q^2;q^2)_\infty (-q^2/z;q^2)_\infty}{(-q^3/z;q^2)_\infty}\right),
\end{align}
where the function $\nu(\alpha,z;q)$ is defined in \cite[(1.18)]{BDG} by
$$\nu(\alpha,z;q) := \sum_{n=0}^\infty \frac{\alpha^n q^{n^2+n}}{(-z q;q^2)_{n+1}}.$$
We let  $z\mapsto zq$  in \eqref{eqn_bdg611} and multiply the resulting identity by $zq$.  Using this and  \eqref{eqn_bphiid},} we find that
$${A}_{\phi}(z;q) + {B}_{\phi}(-z^{-1};q)
= D_{\phi}(z;q),$$
where
\begin{align*}D_{\phi}(z;q) &:= 1+z(-zq;q^2)_\infty\left(-1 + \frac{(-q;q)_\infty (q^2;q^2)_\infty (-q/z;q^2)_\infty}{(-q^2/z;q^2)_\infty}\right) \\
&= 1-z(-zq;q^2)_\infty + z\frac{(-q;q)_\infty}{(-q^2/z;q^2)_\infty}\left(1+\sum_{n=1}^\infty(z^n + z^{-n})q^{n^2}\right).
\end{align*}  Above, we have also used  the Jacobi triple product  \cite[(2.2.10)]{AndrewsEncy}.
Thus, the derivative difference on the left hand side of  \eqref{eqn_ddAB1Phi} equals
$
 \frac{\partial}{\partial z} \big |_{z=1} {D}_{\phi}(z;q).
$  After a direct calculation using the definition of $D_{\phi}(z;q)$ and some simplification, we obtain that this derivative difference equals $F_1(q) - F_2(q)$.  

To prove the second assertion of the theorem, it now suffices to show that $F_1(q) - F_2(q) \succeq 0.$  
 From \cite{WIN5Lehmer}, we have  $$F_3(q) \succeq 0.$$  From this and \eqref{def_F1q}, it is not difficult to see that 
\begin{align}\label{def_sand1} F_1(q) - 2 F_3(q) \sum_{n=1}^\infty q^{n^2} \succeq 0.\end{align}  
    Thus, we have from  \eqref{def_sand2},   \eqref{def_sand3}, and \eqref{def_sand1} that  $F_1(q) - F_2(q) \succeq_W 0$ for some explicit, finite, set $W$.     The proof is complete after a direct calculation  of the $q$-series for $F_1(q)-F_2(q)$ up  to $O(q^{n_W})$, where $n_W:= \max W $, which reveals that, in fact, $F_1(q) - F_2(q) \succeq 0$  as claimed.

\subsection{Proof of Proposition \ref{prop_as}}\label{sec_proofpropas}

     Setting $r=1$ and $\ell=1$ in \cite[Section 5.2]{WIN5Lehmer} shows that  $F_2(q)$ is the generating function for $|A(n)|$, where $$A(n):=\{(\lambda, (a))\vdash n \ \big| \ \lambda \in \mathcal Q_o, a \mbox{ odd}, a\not \in \lambda\}.$$
    Similarly, setting $r=1$, $\ell=2$ in \cite[Section 5.2]{WIN5Lehmer} shows that $F_3(q)$  is the generating function for $|B(n)|-\varepsilon(n)$, where  $$\varepsilon(n):=\begin{cases}1, & \text{if } n\equiv 0\pmod 4,\\ 0, & \text{else,} \end{cases}$$   and  $$B(n):= \{(\lambda, (c^d))\vdash n \ \big| \ \lambda \in \mathcal Q_o,
	c \text{ even, } d \text{ odd, }
	 \lambda_1-\lambda_2\leq c, \text{ and }
	  \lambda \neq \mu(c) { \text{ if } c\geq4} \}.$$ 
   Here,  $\mu(c)$ is   defined for
	   even $c\geq 4$ to be the partition 
	  $$\mu(c)=\begin{cases}(\frac{c}{2}+1, \frac{c}{2}-1), & \mbox{ if } c\equiv 0\pmod 4,\\ (\frac{c}{2}+2, \frac{c}{2}-2), & \mbox{ if } c\equiv 2\pmod 4.\end{cases}$$ Thus, $\mu(c)$ is a partition of $c$ into two distinct odd parts with smallest possible difference between the parts. {We remark that, if $n\not \equiv 0\pmod 4$, $c$ even, and $d$ odd, then $(\lambda, (c^d))\neq (\mu(c), (c^d))$ vacuously.  }

{We will show that  $2(|B(n)|-\varepsilon(n))-|A(n)|\geq 4$ for  all $n\not \in S$.}
     For $n<53$, this can be verified directly.    For the remainder of the proof, let  $n\geq 53$. \smallskip 
  {\ \\ {
   {\underline{Roadmap of the proof}.}  Since the proof is intricate, we begin by providing a roadmap for the benefit of the reader.  Ultimately, we   show that  $2|B(n)|\geq|A(n)|+6$, which is sufficient to prove the proposition.    To do this, we   establish   relevant injections.  To describe them, we denote by $B'(n)$  the multiset whose elements are precisely those of $B(n)$, each appearing with multiplicity $2$. Equivalently,  $B'(n)$ is the disjoint union of two copies of $B(n)$. We also 
  let $$U(n):=\{(\lambda, (c^d))\vdash n \mid \lambda \text{ has odd parts, }
	c \text{ even, } d \text{ odd}, \lambda_1-\lambda_2\leq c\}.$$  Note that  $B(n)\subseteq U(n)$.  Finally, we 
 let $U'(n)$ be the multiset whose elements are precisely those of $U(n)$, each appearing with multiplicity $2$. 
In the proof of the proposition below, we define an injection $\psi$ from $A(n)$ to $U'(n)$ as a composition of two mappings $\Psi_1$ (Step 1) and $\Psi_2$ (Step 2). {At the end of Step 2, we describe the image of $\psi$. } Then  we  define an injection $\zeta$ (Step 3) $$ \psi(A(n))\setminus B'(n) \to B'(n)\setminus \psi(A(n))$$   and show that $$|B'(n)\setminus \psi(A(n))|\geq |\psi(A(n))\setminus B'(n)|+6$$ by explicitly listing six elements in $B'(n)\setminus \psi(A(n))$ that are not in the image of $\zeta$.
 
 %In the proof of the proposition below, we define an injection $\psi$ from $A(n)$ to $U'(n)$ as a composition of two mappings $\Psi_1$ and $\Psi_2$. Then we  define an injection $$\zeta: \psi(A(n))\setminus B'(n) \to B'(n)\setminus \psi(A(n))$$ and show that $$|B'(n)\setminus \psi(A(n))|\geq |\psi(A(n))\setminus B'(n)|+6$$ by explicitly listing six elements in $B'(n)\setminus \psi(A(n))$ that are not in the image of $\zeta$. 
 This shows that $|B'(n)|\geq |\psi(A(n))| +6$, which is equivalent to $2|B(n)|\geq |A(n)|+6,$ and implies $2(|B(n)|-\varepsilon(n))\geq |A(n)|+4$ as desired.  %The proofs of these injections follow below, and are broken up into two main steps, Step 1 and Step 2.  Step 2 is further broken down into several cases.
 The details of the proof of the proposition begin now.}}

Let \begin{align*}A_1(n) & =\{(\lambda, (a))\in A(n) \mid 1\in \lambda\},\\
A_2(n) & =\{(\lambda, (a))\in A(n) \mid 1\not\in \lambda, a\neq 1\},\\
A_3(n) & =\{(\lambda, (a))\in A(n) \mid 1\not\in \lambda, a= 1\}.
\end{align*} Then, $A(n) = A_1(n)\sqcup A_2(n) \sqcup A_3(n)$. We also define 
\begin{align*}C_1(n) & =\{(\eta, (c))\vdash n \mid c \text{ even, } c\geq 4, \eta\in \mathcal Q_o, 1\not \in \eta, c-1\not \in \eta\},\\ C_2(n) & =\{(\eta, (c))\vdash n \mid c \text{ even, } c\geq 2,   \eta\in \mathcal Q_o, 1 \in \eta, c+1\not \in \eta\},
\end{align*} and set $C(n)=C_1(n)\sqcup C_2(n)$. 
	
	We define $\psi:A(n)\to U'(n)$ as $\psi=\Psi_2\circ \Psi_1$, where    $\Psi_1:A(n)\to C(n)$ and $\Psi_2: C(n)\to U'(n)$ are defined in the steps below. 	
	\smallskip \\ \noindent {\bf Step 1:}  $\Psi_1:A(n)\to C(n)$. Given $(\lambda, (a))=((\lambda_1, \lambda_2, \ldots, \lambda_{\ell(\lambda)}), (a)) \in A(n)$, we define  $$\Psi_1(\lambda, (a))=(\eta, (c))=\begin{cases} ((\lambda\setminus (1), (a+1)), & \mbox{ if }  (\lambda, (a))\in A_1(n), \\ (\lambda\cup (1), (a-1)),& \mbox{ if }  (\lambda, (a))\in A_2(n), \\ (\lambda\setminus (\lambda_1), (\lambda_1+1)),& \mbox{ if } (\lambda, (a))\in A_3(n).
	 \end{cases}$$

	 Then $\Psi_1$ induces bijections $\Psi_1:A_1(n)\to C_1(n)$ and $\Psi_1:A_2(n)\to C_2(n)$,  and the injection $\Psi_1:A_3(n)\to C_1(n)$  whose image is $$C_1'(n)=\{(\eta, (c))\in C_1(n)\mid \eta_1\leq c-3\}.$$ We note that for $n\geq 4$ even, the pair $(\emptyset, (n))$ belongs to $C'_1(n)$ and thus also to $C_1(n)$. 
\smallskip \\	\noindent {\bf Step 2:}    $\Psi_2: C(n)\to U'(n)$. 
	Let $(\eta, (c))\in C(n)$. 
% \cycom{With the convention of $q_\eta=r_\eta=0$ for $\eta=\emptyset$ below, we don't have to define $\Psi_2(\eta, (c))$ separately for $\eta=\emptyset$. I marked in red the words that can be omitted.}
 %{\color{red}If $\eta=\emptyset$,  we define $\Psi_2(\emptyset, (n))=(\emptyset, (n))$. }
 If $\eta\neq\emptyset$, write   $\eta_1-\eta_2=q_\eta c+r_\eta$ with $q_\eta, r_\eta \in \mathbb Z$, $q_\eta \geq 0$, $0<r_\eta\leq c$. We use  the convention that $\eta_j=0$ for all $j>\ell(\eta)$. If $\eta=\emptyset$, let $q_\eta = r_\eta = 0$.

	For $i=1,2$, we write  $C_i= C_{i,e}\sqcup C_{i,o}$, where \begin{align*}C_{i,e}(n)& =\{(\eta, (c))\in C_i(n)\mid q_\eta \text{ even}\},\\ C_{i,o}(n)& =\{(\eta, (c))\in C_i(n)\mid q_\eta \text{ odd}\}. \end{align*} 
	
	Moreover,  if $(\eta, c)\in C'_1(n)$, then $q_\eta=0$ and thus $C'_1(n)\subseteq C_{1,e}(n)$.  %We write $C'_{1,e}(n)= \cap C'_1(n)$ to emphasize that $q_\eta$ is even for pairs in this set,  

	 Set  $\eta-(q_\eta c):=(\eta_1-q_\eta c, \eta_2, \ldots, \eta_{\ell(\eta)})$. Note that $\eta-(q_\eta c)\in \mathcal Q_0$. 
  For simplicity, we write $q$ for $q_\eta$. 
	 
	 For $(\eta, (c))\in C(n)$  we define 
	 $$\Psi_2(\eta,(c))=(\xi, (c^d))=\begin{cases} (\eta-(q c), (c^{q+1})), & \text{ if } (\eta,(c)) \in C_{1,e}(n)\cup C_{2,e}(n),\\ ((\eta-(q c))\cup (c-1)\cup(1), (c^{q})), & \text{ if } (\eta,(c)) \in  C_{1,o}(n),\\ ((\eta-(q c))\setminus(1)\cup (c+1), (c^{q})),  & \text{ if } (\eta,(c)) \in  C_{2,o}(n).
 \end{cases}$$

All pairs %$(\xi, (c^d))$ 
obtained  satisfy $\xi_1-\xi_2\leq c$.

We determine the image of the relevant subsets of $C(n)$ under $\Psi_2$. Notice that $\Psi_2$ is an injection on each of the relevant subsets. We have

$$\Psi_2(C_{1,e}(n))=\left\{(\xi, (c^d))\in U(n) \left| \begin{array}{l} \xi \in \mathcal Q_o, c\geq 4, \text{ and }
\\ ((1\not\in \xi,   c-1\not \in \xi) \text{ or }\\ \quad\quad\quad\quad (1\not\in\xi, \xi_1=c-1, d>1) \text{ or } (\xi=(1),d>1))\end{array}\right.\right\},  $$

$$\Psi_2(C_{2,e}(n))=\left\{(\xi, (c^d))\in U(n) \Big| \begin{array}{l}  \xi \in \mathcal Q_o, 1\in \xi, \text{ and } (  c+1\not \in \xi \text{ or } ( \xi_1=c+1, d>1)), \\ \hspace{2.5cm} \text{ and if } \ell(\xi)= 1 \text{ then } d=1\end{array}\right\},$$

 $$\Psi_2(C_{1,o}(n))=\left\{(\xi, (c^d))\in U(n) \Bigg| \begin{array}{l} 1\in \xi, c\geq 4, \,   c-1\in \xi \text{ and } \ell(\xi)\geq 3 \text{ and } \\  \xi \not \in \mathcal Q_o \implies (\xi_1=\xi_2=c-1 \text{ and } \xi\setminus (c-1)\in \mathcal Q_o) \text{ or } \\ \hspace{2.5cm} \xi=(c-1,1,1)  \end{array} \right\},$$
 
 $$\Psi_2(C_{2,o}(n))=\left\{(\xi, (c^d)\in U(n) \Bigg| \begin{array}{l} 1\not\in \xi \text{ and } c+1\in \xi \text{ and } \ell(\xi)\geq 2 \text{ and } \\ \xi \not \in \mathcal Q_o \implies \xi_1=\xi_2=c+1 \text{ and } \xi\setminus (c+1)\in \mathcal Q_o
 \end{array} \right\}.$$
 Moreover, $\Psi_2$ is the identity on $C'_{1}(n)$, so
 $$\Psi_2(C'_{1}(n))=C'_{1}(n)=\{(\xi, (c))\in U(n) \mid \xi\in \mathcal Q_o, c\geq 4, 1\not \in \xi, \xi_1\leq c-3\} .$$
Thus, $\psi(A(n))$ is the multiset $$\psi(A(n))=\Psi_2(C_{1,e}(n))\sqcup \Psi_2(C_{2,e}(n)) \sqcup \Psi_2(C_{1,o}(n))\sqcup \Psi_2(C_{2,o}(n))\sqcup \Psi_2(C'_{1}(n)).$$

To find  the pairs $(\xi, (c^d))\in \psi(A(n))$ occurring   with multiplicity $2$, we determine the mutual intersections of the images under $\Psi_2$ of the different subsets of $C(n)$. 

%$\Psi_2(C_{1,e}(n))\cap \Psi_2(C_{2,e}(n))=\emptyset$,\quad  $\Psi_2(C_{1,e}(n))\cap \Psi_2(C_{1,o}(n))=\emptyset$, \quad $\Psi_2(C_{2,e}(n))\cap \Psi_2(C_{2,o}(n))=\emptyset$,\quad $\Psi_2(C_{2,e}(n))\cap \Psi_2(C'_{1,e}(n))=\emptyset$, \quad $\Psi_2(C_{1,o}(n))\cap \Psi_2(C_{2,o}(n))=\emptyset$, \quad $\Psi_2(C_{1,o}(n))\cap \Psi_2(C'_{1,e}(n))=\emptyset$,  \quad $\Psi_2(C_{2,o}(n))\cap \Psi_2(C'_{1,e}(n))=\emptyset$. 
%\ccom{Not sure we need to list all empty intersections. We could just list the intersections below and write that all other intersections are empty. }

We have 
\begin{align*}  \Psi_2(C_{1,e}(n))\cap &  \Psi_2(C_{2,o}(n))=\\ & \{(\xi, (c^d))\in U(n) \mid \xi\in\mathcal Q_o, 1\not\in \xi, c\geq 4,\ c+1\in \xi,  c-1\not \in \xi \text{ and } \ell(\xi)\geq 2\}, \\  \ \\ 
\Psi_2(C_{1,e}(n))\cap &   \Psi_2(C'_{1}(n)) \\ & =  \Psi_2(C'_{1}(n))=\{(\xi, (c))\in U(n) \mid \xi\in \mathcal Q_o, c\geq 4, 1\not \in \xi, \xi_1\leq c-3\},   \\ \ \\ 
\Psi_2(C_{2,e}(n))\cap &  \Psi_2(C_{1,o}(n))=\\ & \left\{(\xi, (c^d))\in U(n) \Big| \begin{array}{l} \xi\in \mathcal Q_o, 1\in \xi, c\geq 4, c-1\in \xi, \ell(\xi)\geq 3, \text{ and }\\ \quad \quad   ((c+1\not \in \xi) \text{ or } (\xi_1=c+1, d>1) )\end{array}\right\}.   \end{align*} 
 One can easily verify that all other mutual intersections are empty. Therefore, the mapping $\psi$ is a multiset  injection. 
Let $\mathcal S(n)$ be the union of the three sets above, i.e., 
{$\mathcal S(n)$ is the (disjoint) set of pairs $(\xi, (c^d))\in U(n)$ with $\xi \in \mathcal Q_o$, $c\geq 4$ and 
\begin{itemize}
\item if $1\not \in \xi$, then $(c+1\in \xi,  c-1\not \in \xi \text{ and } \ell(\xi)\geq 2)$ or $(d=1 \text{ and } \xi_1\leq c-3)$;
\item if $1 \in \xi$, then $c-1\in \xi, \ell(\xi)\geq 3, \text{ and } ((c+1\not \in \xi) \text{ or } (\xi_1=c+1, d>1)).$
\end{itemize}}
%\begin{align*}\mathcal S(n)= & \{(\xi, (c^d))\in U(n) \mid \xi \in \mathcal Q_o, 1\not\in \xi, c\geq 4,\ c+1\in \xi,  c-1\not \in \xi \text{ and } \ell(\xi)\geq 2\}\\ &  \cup \{(\xi, (c))\in U(n) \mid \xi\in \mathcal Q_o, c\geq 4, 1\not \in \xi, \xi_1\leq c-3\}\\&  \cup \left\{(\xi, (c^d))\in U(n) \Big| \begin{array}{l} \xi\in \mathcal Q_o, 1\in \xi, c\geq 4, c-1\in \xi, \ell(\xi)\geq 3), \text{ and }\\ \quad \quad   ((c+1\not \in \xi) \text{ or } (\xi_1=c+1, d>1))\end{array}\right\}\end{align*}
Thus, the elements of $\psi(A(n))$ occurring twice are precisely the elements of $\mathcal S(n)$. We denote by $\mathcal S'(n)$ be the multiset whose elements are precisely those of $\mathcal S(n)$, each appearing with multiplicity $2$. Then, every element in $\psi(A(n))\setminus \mathcal S'(n)$ has multiplicity $1$. 
\smallskip \\	\noindent {\bf Step 3:}  Recall that our goal is to prove that $|B'(n)|\geq |\psi(A(n))| +6$. To this end, we now show that $|B'(n)\setminus \psi(A(n))|\geq |\psi(A(n))\setminus B'(n)|+6$.

 As a set, $\psi(A(n))\setminus B'(n)$ consists of \textit{precisely} the pairs $(\xi, (c^d))\in U'(n)$ satisfying one of the following conditions: 

\begin{itemize}

\item[(i)] $\xi=\mu(c)$; 

\item[(ii)] $\xi_1=\xi_2=c-1$, $\xi\setminus (c-1)\in \mathcal Q_o$ and $1 \in \xi$ and $c\geq 4$;  %\ccom{Note that $\xi\setminus\{\xi_1\}\in\mathcal Q_o$.}

   \item[(iii)] $\xi=(c-1,1,1)$, $c\geq 4$;

 \item[(iv)]$\xi_1=\xi_2=c+1$, $\xi\setminus (c+1)\in \mathcal Q_o$ and $1 \not \in \xi$; 
   
%   \item[(iv)] $\xi=(1,1)$, $d>1$. \ccom{remove??}

\end{itemize}

Since $B(n)=\{(\lambda, (c^d))\in U(n)\mid \lambda \text{ has distinct parts}, \lambda\neq \mu(c)\}$, it is clear  these pairs do not belong to $B'(n)$ and  all the pairs in $\psi(A(n))$ that are not in $B'(n)$ must satisfy one of (i)-(iv).

We define a multiset injection $\zeta: \psi(A(n))\setminus B'(n)\to B'(n) \setminus \psi(A(n))$. 
In the process, we also clarify  that the pairs satisfying (i)-(iv) above belong to $\psi(A(n))$ and discuss their multiplicities in $\psi(A(n))\setminus B'(n)$. We note that %$B'(n) \setminus \psi(A(n))$ consists of one copy of each pair in $B(n)\cap(\psi(A(n))\setminus \mathcal S'(n))$  and two copies of each pair in $B(n)\setminus \psi(A(n))$. \bcom{Since a lot of the arguments below for the image being appropriate are like ``the image is not in $\mathcal{S}(n)$'', I wonder if we could say something like 
 $B'(n) \setminus \psi(A(n))$ consists of one copy of each pair in $B(n)\setminus \mathcal{S}(n)$ and an additional copy of each pair in $B(n)\setminus \psi(A(n))$.
By inspection, we see that the  pairs $(\xi, (c^d))\in B(n)$ satisfying  one of the conditions below  are \textit{not} in $\psi(A(n))$:
 $$\def\arraystretch{1.3}\begin{array}{l} 
\bullet \  1\in \xi, \ c+1  \in \xi, \ c-1\not \in \xi, \ \xi_1>c+1;  \\  \bullet \  c=2, \ 1\in \xi, \ 3  \in \xi, \ \xi_1>3;  \\ \bullet \ 
  1\not \in \xi, {c \geq 4,}\ c-1  \in \xi, \ c+1\not \in \xi, \ \xi_1>c+1.
 \end{array}$$ 
The list above is not exhaustive but it is sufficient for our purposes. We denote by $\mathcal T(n)$ the set containing  each pair $(\xi, (c^d))\vdash n$ satisfying the conditions above and by $\mathcal T'(n)$ the multiset whose elements are precisely those of $\mathcal T(n)$, each appearing with multiplicity $2$.

Let $(\xi, (c^d))\in \psi(A(n))\setminus B'(n)$. {We note that when pairs are mapped by $\zeta$ to $\mathcal T'(n)$ the assignment is \textit{ad hoc}. Moreover, the pairs in $\mathcal T'(n)$ that are in the image of $\zeta$ are of the form $(\mu(n-k)\cup \alpha, (x^y))$, where $\alpha$ is a partition with small parts and $|\alpha|+xy=k$. Since $n\geq 53$, in all such pairs the parts of $\mu(n-k)$ are  larger than the parts of $\alpha$. This is not a necessary condition but it allows for an easy check that the map $\zeta$ is indeed an injection. In what follows, the congruence conditions on $n$ are imposed by the requirement that, if $(\xi, (c^d))\in \psi(A(n))\setminus B'(n)$, then   $c$ is even and $d$ is odd.   }\medskip

\underline{Case (i)} $\xi=\mu(c)$. Since $c$ is even and $d$ is odd, if $(\mu(c), (c^d))\vdash n$, then $n\equiv 0 \pmod 4$. \smallskip

If $d=1$, since $n\geq 53$, it follows that $c\geq 28$. Then   $(\mu(c), (c))\in \Psi_2(C_{1,e}(n))\cap    \Psi_2(C'_{1}(n))$. 
 We define $$\zeta(\mu(n/2),(n/2)):= (\mu(n-10)\cup(5,1),(4))\in \mathcal T'(n).$$ Notice that we are mapping each copy of $(\mu(n/2),(n/2))$ to one of two copies of $(\mu(n-10)\cup(5,1),(4))$. As we will see below, this is the only pair that occurs twice in $\psi(A(n))\setminus B'(n)$. 

\medskip  

If $d>1$, $c=4$, then  $(\mu(4), (4^d))=((3,1), (4^{(n-4)/4}))\in \Psi_2(C_{2,e}(n)){\setminus\mathcal{S}(n)}$. Thus, $n\equiv 0 \pmod 8$, and we define $$\zeta((3,1), (4^{(n-4)/4})):=(\mu(n-14)\cup(7,1),(6))\in \mathcal T'(n).$$

If $d>1$, $c=6$, then $(\mu(6), (6^d))=((5,1), (6^{(n-6)/6})\in \Psi_2(C_{2,e}(n)){\setminus\mathcal{S}(n)}$. Thus,  $n\equiv 0 \pmod{12}$, and we define $$\zeta((5,1), (6^{(n-6)/6})):=(\mu(n-14)\cup(7,1),(6))\in \mathcal T'(n).$$

If $d>1$, $c\geq 8$, then $(\mu(c), (c^d))\in \Psi_2(C_{1,e}(n)){\setminus\mathcal{S}(n)}$.  We define $$\zeta(\mu(c), (c^d)):=(\mu(3c), (c^{d-2})).$$
The parts of $\mu(3c)$ are larger than $c+1$ and thus $(\mu(3c), (c^{d-2}))\not \in \mathcal{S}(n)$. Therefore,  $(\mu(3c), (c^{d-2}))\in B'(n)\setminus\psi(A(n))$. Moreover, $(\mu(3c), (c^{d-2}))\not\in \mathcal T(n)$. 

 \bigskip

\underline{Case (ii)}  If $\xi_1=\xi_2=c-1$, $\xi\setminus (c-1)\in \mathcal Q_o$,  $1 \in \xi$, and $c\geq 4$, then $(\xi, (c^d))\in \Psi_2(C_{1,o}(n)){\setminus\mathcal{S}(n)}$. 
\smallskip

To define $\zeta$, we consider two subcases.   \medskip

(I) $\xi_3<c-3$. We  define
$$\zeta(\xi, (c^d)):=(\xi\setminus (c-1,c-1)\cup \mu(2c-2), (c^d))=:(\nu, (c^d)).$$

Since $\ell(\xi)\geq 3$, $1\in \xi$,  and $\mu(2c-2)=(c+1, c-3)$, we must have $c>4$. Since $1\in \nu$ and $c-1 \not \in \nu$, it follows that $(\nu, (c^d))\not \in \mathcal S(n)$. Since $c+1\in\nu$ is the largest part,  it follows that $(\nu, (c^d))\not \in \mathcal T(n)$.\medskip

(II)  $\xi_3=c-3$. 
If $c=4,6,8,10$, we define $\zeta(\xi, (c^d))\in \mathcal T'(n)$ 
in an \textit{ad hoc} manner by the table below:

$$\begin{array}{l|l|l} n& (\xi, (c^d))&\zeta(\xi, (c^d))\smallskip  \\ \hline 3\pmod 8 & ((3,3,1),(4^{(n-7)/4}))& (\mu(n-11)\cup(5,3,1), (2))\smallskip \\ \hline 8\pmod {12} & ((5,5,3,1),(6^{(n-14)/6})) & (\mu(n-18)\cup(9,1),(8)) \smallskip \\ \hline 12\pmod {16} & ((7,7,5,1),(8^{(n-20)/8})) & (\mu(n-26)\cup(13,1),(12)) \smallskip \\  \hline 15\pmod{16} & ((7,7,5,3,1),(8^{(n-23)/8}))& (\mu(n-11)\cup(5,3,1), (2))\smallskip \\ \hline 16\pmod{20} & ((9,9,7,1),(10^{(n-26)/10})) & (\mu(n-26)\cup(13,1),(12)) \smallskip \\ \hline 4\pmod{20} & ((9,9,7,5,3,1),(10^{(n-34)/10})) & (\mu(n-26)\cup(13,1),(12)) \smallskip \\   \hline 1\pmod{20}& ((9,9,7,5,1),(10^{(n-31)/10}))& (\mu(n-29)\cup(7,3,1), (2^9))\smallskip \\
\hline 19\pmod{20}& ((9,9,7,3,1),(10^{(n-29)/10}))& (\mu(n-27)\cup(5,3,1), (2^9))\smallskip \end{array}
$$ {Note that the congruence conditions on $n$ imply that no pair in the right hand column occurs more than twice. }

For the remaining pairs, i.e., $c\geq 12$, we define

$$\zeta(\xi, (c^d)):=(\xi\setminus(\xi_1, \xi_2,\xi_3, 1) \cup \mu(3c-4) ,(c^d))=:(\nu, (c^d)).$$

Since $c\geq 12$,  $\mu(3c-4)$ has parts greater than $c+1$. Thus $1, c+1\not \in \nu$, and therefore $(\nu, (c^d))\not \in \mathcal S(n)$. Since $1, c-1\not \in \nu$, we have $(\nu, (c^d))\not \in \mathcal T(n)$.

\bigskip

\underline{Case (iii)} If $\xi=(c-1,1,1)$, $c\geq 4$, then $(\xi, (c^d))\in \Psi_2(C_{1,o}(n)){\setminus\mathcal{S}(n)}$.\medskip 

If $d=1$, we define $$\zeta(((n-3)/2, 1,1) , ((n-1)/2)):=(\mu(n-13)\cup(7,3,1), (2))\in \mathcal T'(n).$$

If $d=3$, we define $$\zeta(((n-5)/4, 1,1) , ((n-1)/4)^3):=(\mu(n-13)\cup(7,3,1), (2))\in \mathcal T'(n).$$

If $d>3$,  we define $$\zeta((c-1, 1,1), (c^d))=(\mu(5c)\cup (1), (c^{d-4})).$$ 
Since $c\geq 4$, the parts of $\mu(5c)$ are larger than $c+1$ and thus $(\mu(5c)\cup(1), (c^{d-2}))$ is neither in $S(n)$ nor in $\mathcal T(n)$. \bigskip

 \underline{Case (iv)} If $\xi_1=\xi_2=c+1$, $\xi\setminus (c+1)\in \mathcal Q_o$, and $1 \not \in \xi$, then $(\xi, (c^d))\in \Psi_2(C_{2,o}(n)){\setminus\mathcal{S}(n)}$.\medskip

To define $\zeta$, we consider two subcases.   

(I) $\xi_3<c-1$. We define $$\zeta((3,3), (2^{(n-6)/2})):=(\mu(n-18)\cup(9,1),(8))\in \mathcal T'(n),$$ and if 
$(\xi,(c^d))\neq ((3,3), (2^{(n-6)/2}))$, we define
$$\zeta(\xi, (c^d)):=(\xi\setminus (c+1,c+1)\cup \mu(2c+2), (c^d))=:(\nu, (c^d)).$$

Since $(\xi,(c^d))\neq ((3,3), (2^{(n-6)/2}))$, we must have $c>2$. Moreover, $\mu(2c+2)=(c+3, c-1)$ and these are the largest parts in $\nu$.  Since $1, c+1\not \in \nu$ and the largest part in $\nu$ is $c+3$, it follows that $(\nu, (c^d))\not \in \mathcal S(n)$. In fact, $(\nu, (c^d)) \in \mathcal T'(n)$. Furthermore, since $1\not\in\nu$, these pairs do not occur in the images of $\zeta$ in cases (i)--(iii) above.

\medskip 

(II)  If $\xi_3=c-1$, 
 we define 
$$\zeta(\xi, (c^d))=(\xi\setminus (\xi_1, \xi_2, \xi_3) \cup \mu(3c)\cup (1),(c^d))=:(\nu, (c^d)).$$ 

Since $1\not \in \xi$, we have  $c\geq 4$.
Then,  the parts of $\mu(3c)$ are at least $c+1$ and they are the largest parts in $\nu$. Thus $1 \in \nu$ and $c-1\not\in\nu$, and therefore $(\nu, (c^d))\not \in \mathcal S(n)$. Moreover, $(\nu, (c^d))\not \in \mathcal T(n)$. 

\medskip

To finish the proof, for each $n\geq 53$, we display  six  pairs in $B'(n)$ that do not occur in the image of $\zeta$. These pairs belong to $\mathcal T'(n)$. 

If $n\equiv 3\pmod 4$,  two copies of each of the following pairs
   \begin{align*} (\mu(n-15)\cup(5,3,1), (2^3)),\\ (\mu(n-19)\cup (5,3,1), (2^5)),\\ (\mu(n-23)\cup (5,3,1), (2^7)).
   \end{align*}

   If $n\equiv 1\pmod 4$,  two copies of each of the following pairs
     \begin{align*} (\mu(n-17)\cup(7,3,1), (2^3)),\\ (\mu(n-21)\cup (7,3,1), (2^5)),\\ (\mu(n-25)\cup (7,3,1), (2^7)).
   \end{align*}

 If $n$ is even,  two copies of each of the following pairs
   \begin{align*} (\mu(n-14)\cup (7,3), (4)),\\ (\mu(n-20)\cup(9,5), (6)),\\ (\mu(n-18)\cup(11,7), (8)).
   \end{align*}
     Notice that the last three pairs are the only pairs $(\nu, (c^d))$ such that $1\not \in \nu$. Pairs in $\mathcal T'(n)$ with $1\not\in \nu$ occurred in the image of $\zeta$ in case (iv) subcase (I). However, in that case $c+3, c-1$ are the largest parts of $\nu$ while here $c+3, c+1$ are the smallest parts of $\nu$ and, since $n\geq 53$, there are larger parts in $\nu$.

  Thus,  $|B'(n)|\geq |\psi(A(n)|+6$.

\qed

\subsection{Proof of Proposition \ref{prop_bs}}\label{sec_proofpropbs}    
We provide two different proofs of Proposition \ref{prop_bs} below, one of which is combinatorial in nature (see Section \ref{sec_propbscomb}), the other of which is analytic (see Section \ref{sec_propbsanaly}).

\subsubsection{Combinatorial Proof of Proposition \ref{prop_bs}}\label{sec_propbscomb}

As shown in  \cite{WIN5Lehmer}, if $F_2(q) = \sum_{n=1}^\infty b_n q^n$, then $b_n$ equals the number of parts in all partitions of $n$ with distinct odd parts, i.e. $$b_n=\sum_{\lambda\in Q_o(n)}\ell(\lambda).$$

Let $n\geq 9$. We first create a length preserving bijection $\varphi$  from  $\{\lambda \in \mathcal Q_o(n) \mid 1\not \in \lambda\}$  to  $\{\xi\in \mathcal Q_o(n-4)\mid \mbox{if } \ell(\xi)\geq 3, \mbox{ then } \xi_{\ell(\xi)-2}-\xi_{\ell(\xi)-1}>2 \}$.

	Let $\lambda =(\lambda_1, \lambda_2, \ldots, \lambda_{\ell(\lambda)})\in \mathcal Q_o(n)$, $1\not \in \lambda$ and define 
	$$\varphi(\lambda)=\begin{cases} (n-4), & \mbox{ if } \ell(\lambda)=1, \mbox{ i.e., } \lambda=(n),\\ (\lambda_1, \lambda_2, \ldots, \lambda_{\ell(\lambda)-1}-2, \lambda_{\ell(\lambda)}-2), & \mbox{ if }\ell(\lambda)\geq 2. \end{cases}$$
	Clearly, $\ell(\lambda)=\ell(\varphi(\lambda))$. \medskip 
	
	Next, we consider the bijection $\psi$ from $\{\lambda\in \mathcal Q_o(n) \mid 1\in \lambda\}$ to $\{\xi\in \mathcal Q_o(n-1) \mid 1\not \in \xi\}$ given by $\psi(\lambda)=\lambda\setminus(1)$. Clearly, $\ell(\lambda)=\ell(\psi(\lambda))+1$. 
	
	It remains to show that the number of parts equal to $1$ in all partitions in $\mathcal Q_o(n)$ is less than or  equal to   the total number of parts in all partitions in $\mathcal Y:=\{\xi\in \mathcal Q_o(n-1) \mid 1 \in \xi\}\cup\{\xi\in \mathcal Q_o(n-4)\mid  \ell(\xi)\geq 3 \mbox{ and } \xi_{\ell(\xi)-2}-\xi_{\ell(\xi)-1}=2\}$. We create an injection $\Xi$ from $\{\lambda\in \mathcal Q_o(n) \mid 1\in \lambda\}$ to the set of partitions in $\mathcal Y$ with exactly one marked part.

	Let $\lambda\in \mathcal Q_o(n)$ be such that $1\in \lambda$. Since $n\geq 9$, we have $\ell(\lambda)\geq 2$. Let  $a:=\lambda_{\ell(\lambda)-1}$. 	
	
		If $a\geq 9$, $1\not\in \mu(a-1)$, and we define $\Xi(\lambda)=\lambda\setminus (a)\cup \mu(a-1)\in \mathcal Q_o(n-1)$ with part $1$ marked. Note that $\Xi(\lambda)$ has at least three parts and if it has exactly three parts, then the difference between the first and second part is $2$ or $4$.  The marked part of $\Xi(\lambda)$ is the last part.  
		
Next we consider the case when $a=3,5,$ or $7$. Since $n\geq 9$, we have $\ell(\lambda)\geq 3$.

If $\ell(\lambda)\geq 4$,  define $\Xi(\lambda)=\lambda\setminus (\lambda_1, a)\cup(\lambda_1+a-1)\in \mathcal Q_o(n-1)$ with marked  first, second, or third  part according to  $a=3, 5$, or $7$,  respectively.	Note that if $a=7$, the difference between the first and second part in $\Xi(\lambda)$ is at least $8$. Thus, if $\ell(\lambda)=4$ and $a=7$,  then  $\Xi(\lambda)$ has exactly three parts and  the marked part is $1$ but the obtained marked partition  is different from the marked partitions obtained in the case $a\geq 9$. 

If $\ell(\lambda)=3$, then $n$ is odd. We define $\Xi(n-8, 7, 1)=(\overline{n-2}, 1)$, $\Xi(n-6, 5, 1)=(n-2, \overline{1})$ and 
$$\Xi(n-6, 5, 1)=\begin{cases} (\mu(n-5), \overline{1}), & \mbox{ if } n \geq 17, n\equiv 1 \pmod 4,\\ (\mu(n-7), \overline{3}), & \mbox{ if } n \geq 17, n\equiv 3 \pmod 4,\\  (\overline{n-2}, 1), & \mbox{ if } 9\leq n \leq 15. \end{cases}$$
Note that $(n-8,7,1)$ occurs only when $n\geq 17$, and that  $(\mu(n-5), \overline{1}), (\mu(n-7), \overline{3}) \in \mathcal Q_o(n-4)$.
\subsubsection{Analytic Proof of Proposition \ref{prop_bs}}\label{sec_propbsanaly}
 
To prove Proposition \ref{prop_bs}, it suffices to show that 
\begin{align}\label{prop_bs_analytic} (q^4+q-1)F_2(q) \succeq_{S'} 0,\end{align} where $S':=\{0,1,2,3,4,5,6,7,8\}$. 
Indeed, we will prove the stronger result with $S' = \{1,3,4,6,8\}$.
Towards \eqref{prop_bs_analytic}, we establish Lemma \ref{lem_bs_analytic_merged} below, which is stated in terms of the polynomials
\begin{align*}
    f_m(q) := \begin{cases}
    -q + q^2 - q^4 + 2q^5 - q^6, & m=1, \\
    -q^3, & m=2, \\
    -q^5 + q^7 - q^8 + q^9 +2q^{10} + q^{13} - q^{15}, & m=3, \\
    -q^{2m-1}+q^{2m+1}-q^{2m+2}+q^{2m+3}+q^{2m+4}, & m\geq 4.
    \end{cases}
\end{align*}

\begin{lemma}\label{lem_bs_analytic_merged} For each integer $m\geq 1$, we have \begin{align}\label{eqn_ms}(q^4+q-1)q^{2m-1} \mathop{\prod_{\ell=1}^\infty}_{\ell\neq m} (1+q^{2\ell-1}) = 
f_m(q) + g_m(q),\end{align}
 where $g_m(q) \succeq 0,$ and $$g_m(q) = \begin{cases} 
 O(q^7), & m=1, \\
 O(q^5), & m=2, \\
  O(q^{16}), & m=3, \\
 O(q^{2m+6}), & m\geq 4.
 \end{cases}$$
\end{lemma}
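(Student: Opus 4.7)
The plan is to expand both sides of \eqref{eqn_ms} as $q$-series and match coefficients. Writing the left-hand side as $q^{2m+3}P_m(q) + q^{2m}P_m(q) - q^{2m-1}P_m(q)$, where $P_m(q) := \prod_{\ell \geq 1,\,\ell \neq m}(1+q^{2\ell-1})$, and letting $p_m(j) := [q^j]P_m(q)$ denote the number of partitions of $j$ into distinct odd parts with no part equal to $2m-1$, the coefficient of $q^n$ in the LHS equals
\[
p_m(n-2m-3) + p_m(n-2m) - p_m(n-2m+1).
\]
The first step is to verify that the low-order coefficients reproduce $f_m(q)$. For $m \geq 4$, one has $p_m(j) = [q^j]\prod_{\ell \geq 1}(1+q^{2\ell-1})$ for $0 \leq j \leq 2m-2$, so a short computation using the initial values $1,1,0,1,1,1,1,1,2,\ldots$ confirms that the LHS agrees with $f_m(q) = -q^{2m-1}+q^{2m+1}-q^{2m+2}+q^{2m+3}+q^{2m+4}$ through order $q^{2m+5}$. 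For $m=1,2,3$ I would instead expand $P_m(q)$ explicitly to the order dictated by the degree of $f_m(q)$ and verify the exceptional forms by direct calculation.

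For the non-negativity of the tail $g_m(q)$, the task reduces, upon substituting $k = n - 2m + 1$, to the inequality
\[
p_m(k) \leq p_m(k-1) + p_m(k-4), \qquad k \geq 7.
\]
I would prove this by constructing an explicit injection from $\{\lambda \in \mathcal{Q}_o(k) : 2m-1 \notin \lambda\}$ into the disjoint union $\{\mu \in \mathcal{Q}_o(k-1) : 2m-1 \notin \mu\}\;\sqcup\;\{\mu \in \mathcal{Q}_o(k-4) : 2m-1 \notin \mu\}$. Partitions containing the part $1$ are sent to $\lambda \setminus \{1\} \in \mathcal{Q}_o(k-1)$, which still avoids $2m-1$. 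Partitions with all parts $\geq 3$ are routed into $\mathcal{Q}_o(k-4)$ via a map modelled on the length-preserving bijection $\varphi$ of Section~\ref{sec_propbscomb}, which sends $(k)$ to $(k-4)$ in the one-part case and otherwise subtracts $2$ from each of the two smallest parts.

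The main obstacle is that $\varphi(\lambda)$ can produce the forbidden part $2m-1$: this occurs precisely when $\lambda_\ell = 2m+1$, when $\lambda_{\ell-1} = 2m+1$, or (in the one-part case) when $k = 2m+3$. For each of these exceptional inputs I would modify $\varphi$ locally, for instance by subtracting $4$ from a suitably chosen larger part, or by merging two parts and splitting the result into two admissible odd parts differently, verifying in every subcase that the modified map remains injective and that its image neither contains $2m-1$ nor overlaps with the image of the $1 \in \lambda$ case. This case analysis is the principal technical hurdle; once settled, combining it with the low-order verification of the first paragraph yields \eqref{eqn_ms} together with the claimed non-negativity and order of vanishing of $g_m(q)$.
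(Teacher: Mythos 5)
Your reduction is correct, and it heads down a genuinely different road from the paper: the paper never interprets the coefficients combinatorially, but instead (for $m\geq 4$) absorbs $(1+q)(1+q^3)(1+q^5)$ into an explicit polynomial $P_m(q)$ and cancels every negative term of the resulting expansion against positive ones using the elementary series identities \eqref{prelim-1} and \eqref{prelim-2}, with similar telescoping devices for $m=1,2,3$. Your translation of $g_m(q)\succeq 0$ into the inequality $p_m(k)\leq p_m(k-1)+p_m(k-4)$, where $p_m(j)$ counts partitions of $j$ into distinct odd parts avoiding $2m-1$, is sound, and the low-order verification against $f_m(q)$ is routine (one calibration slip: the range $k\geq 7$ is right for $m\geq 4$, but since $f_2(q)=-q^3$ and $g_2(q)=O(q^5)$ you need the inequality for all $k\geq 1$ when $m=2$, and from $k=6$ when $m=1$; these are finite checks, so this is minor).

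The genuine gap is that the injection --- which is the entire content of the proof --- is never constructed, and the deferred case analysis cannot be waved through, because the inequality is tight infinitely often. For $m=4$ (forbidden part $7$) one has $p_4(8)=1=p_4(7)+p_4(4)$, $p_4(9)=2=p_4(8)+p_4(5)$, and $p_4(11)=1=p_4(10)+p_4(7)$: on such sizes your map must hit \emph{every} element of both targets, so an ad hoc fix like ``subtract $4$ from a suitably chosen larger part'' has no slack and must be checked against the precise image of $\varphi$ (the partitions $\xi$ with $\xi_{\ell(\xi)-2}-\xi_{\ell(\xi)-1}>2$ when $\ell(\xi)\geq 3$) as well as against the other reroutes, or injectivity fails. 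Worse, for $m=1$ your skeleton degenerates qualitatively: the forbidden part is $1$ itself, so the branch sending partitions containing $1$ into size $k-1$ is empty, yet $p_1(k)\leq p_1(k-4)$ alone is false (e.g.\ $p_1(12)=2>1=p_1(8)$, with $p_1(12)=p_1(11)+p_1(8)$ an equality), so the exceptional reroutes must themselves populate the size-$(k-1)$ target --- a substantively different map you have not specified. The target inequality is true (it is equivalent to the nonnegativity the paper proves analytically), so your program could in principle be completed and would yield a nice combinatorial companion to the paper's argument; but as written the principal step is asserted rather than proved, and the lemma does not follow.
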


Using Lemma \ref{lem_bs_analytic_merged}, we give an analytic proof of Proposition \ref{prop_bs} below.  Following its proof, the remainder of this section is devoted to proving Lemma \ref{lem_bs_analytic_merged}.   

\begin{proof}[Analytic proof of Proposition \ref{prop_bs}] 
By   Lemma \ref{lem_bs_analytic_merged}, we have 
\begin{align}   (q^4+q-1)F_2(q) %&= (q^4+q-1)(-q;q^2)_\infty \sum_{m=1}^3 \frac{q^{2m-1}}{1+q^{2m-1}} \\ & + 
= \sum_{m=1}^\infty (f_m(q)+g_m(q)). \label{eqn_exp}
\end{align} 

We  rewrite $\sum_{m=1}^\infty f_m(q)$ as

\begin{align*}
   &\sum_{j=1}^3f_j(q)+ \sum_{m\geq 4}q^{2m+3}  - \sum_{m\geq 4}(q^{2m-1}+q^{2m+2})+ \sum_{m\geq 4}(q^{2m+1}+q^{2m+4})\\  =& \sum_{j=1}^3f_j(q)+ \sum_{m\geq 4}q^{2m+3} - \sum_{m\geq 4}(q^{2m-1}+q^{2m+2})+ \sum_{m\geq 5}(q^{2m-1}+q^{2m+2})\\=& \sum_{j=1}^3f_j(q)+ \sum_{m\geq 4}q^{2m+3} -q^7-q^{10}\\ =& -q + q^2 -q^3  - q^4 + q^5 - q^6    - q^8 + q^9 +q^{10} +q^{11}+ 2q^{13} + \sum_{m\geq 7}q^{2m+3}.
    \end{align*} 
Since $\sum_{m=1}^\infty g_m(q)\succeq 0$, we obtain the non-negativity of coefficients stated \eqref{prop_bs_analytic}.
 \end{proof}   
 
\begin{proof}[Proof of Lemma \ref{lem_bs_analytic_merged}]    We divide the proof into   cases, depending on  $m$.  \smallskip 

Throughout the proof, we make use of the following calculations. Let $a, i, j$ be positive integers. We express a product $\displaystyle \prod_{k\geq i}(1+q^{2k-1})$ in terms of the smallest, respectively largest, exponent appearing in monomials as \begin{equation}\label{eq_producttoSum}
    1+ \sum_{k\geq i}q^{2k-1}\prod_{\ell>k}(1+q^{2\ell-1}) =  1+ q^{2i-1}+\sum_{k\geq i+1}q^{2k-1}\prod_{i\leq \ell<k}(1+q^{2\ell-1}).
\end{equation} Then, 
\begin{align}\label{prelim-1} & q^{a+2j}\sum_{k\geq i}q^{2k-1}\prod_{\ell>k}(1+q^{2\ell-1})-q^{a}\sum_{k\geq i+j}q^{2k-1}\prod_{\ell>k}(1+q^{2\ell-1}) \\  \nonumber & = q^{a+2j}\sum_{k\geq i}q^{2k-1}\prod_{\ell>k}(1+q^{2\ell-1})-q^{a+2j}\sum_{k\geq i+j}q^{2(k-j)-1}\prod_{\ell>k}(1+q^{2\ell-1})\\ \nonumber & = q^{a+2j}\sum_{k\geq i}q^{2k-1}\prod_{\ell>k}(1+q^{2\ell-1})-q^{a+2j}\sum_{k\geq i}q^{2k-1}\prod_{\ell>k+j}(1+q^{2\ell-1})\succeq 0. \end{align}  Similarly 
\begin{align} \label{prelim-2} & q^{a}\sum_{k\geq i}q^{2k-1}\prod_{\ell>k}(1+q^{2\ell-1})-q^{a+2}\sum_{k\geq i}q^{2k-1}\prod_{\ell>k}(1+q^{2\ell-1}) \\ \nonumber & = q^{a+2i-1}+ q^{a+2i+1}(1+q^{2i-1})+ q^a\sum_{k\geq i+2}q^{2k-1}\prod_{i\leq\ell<k}(1+q^{2\ell-1})\\ \nonumber & \ \ \  - q^{a+2+2i-1}- q^{a+2}\sum_{k\geq i+1}q^{2k-1}\prod_{i\leq\ell<k}(1+q^{2\ell-1})\succeq 0.
\end{align} 
The non-negativity of coefficients follows from the fact that $$q^{a+2}\sum_{k\geq i+1}q^{2k-1}\prod_{i\leq\ell<k}(1+q^{2\ell-1})= q^{a}\sum_{k\geq i+2}q^{2k-1}\prod_{i\leq\ell<k-1}(1+q^{2\ell-1}).$$

We continue with the proof of Lemma \ref{lem_bs_analytic_merged}.

\ \\ {\bf{Case $m\geq 4.$}}
We rewrite the left hand side of \eqref{eqn_ms} as $$P_m(q)Q_m(q),$$ where  
\begin{align*}  P_m(q) &:= (q^4+q-1)q^{2m-1}  (1+q)(1+q^3)(1+q^5) \\
& \  = -q^{2m-1}+q^{2m+1}-q^{2m+2}+q^{2m+3}+q^{2m+4}+2q^{2m+6} +q^{2m+8}\\&\hspace{.2in}+2q^{2m+9}+q^{2m+11} + q^{2m+12}, \\
 Q_m(q)  &:=  \mathop{\prod_{\ell=4}^\infty}_{\ell\neq m} (1+q^{2\ell-1})=1+\mathop{\sum_{k\geq4}}_{k\neq m}q^{2k-1} \mathop{\prod_{\ell>k}}_{\ell\neq m} (1+q^{2\ell-1}).
\end{align*} 
Then,  \begin{align}& \label{neg} g_m(q)=-(q^{2m-1}+q^{2m+2})\mathop{\sum_{k\geq4}}_{k\neq m}q^{2k-1} \mathop{\prod_{\ell>k}}_{\ell\neq m} (1+q^{2\ell-1})\\ \label{pos1} & +(q^{2m+1}+q^{2m+3}+q^{2m+4})\mathop{\sum_{k\geq4}}_{k\neq m}q^{2k-1} \mathop{\prod_{\ell>k}}_{\ell\neq m} (1+q^{2\ell-1})\\ \label{pos2} & +(2q^{2m+6} +q^{2m+8}+2q^{2m+9}+q^{2m+11} + q^{2m+12})Q_m(q).\end{align}
Thus, $g_m(q) = O(q^{2m+6})$.  To show that $g_m(q)\succeq 0$, we show that all terms in \eqref{neg} appear with positive sign in \eqref{pos1} or \eqref{pos2}.  

We first consider the case $m=4$. Then \eqref{neg} equals 
\begin{align*} & -(q^{16}+q^{19}) \prod_{\ell>5} (1+q^{2\ell-1}) -(q^{7}+q^{10}) \sum_{k\geq 6}q^{2k-1}\prod_{\ell>k} (1+q^{2\ell-1})=:-C_1-C_2.  \end{align*} All terms in $C_1$ appear in \eqref{pos2}.  Using \eqref{prelim-1} with  $j=1$, $i=5$, and $a=7, 10$ respectively, terms in $C_2$ cancel with terms in \eqref{pos1}. Thus, $g_4(q)\succeq 0$.

For $m>4$, we rewrite  \eqref{neg}
 separating the terms according to $k=4$, $k\neq 4, m+1$, and $k=m+1$. When $k\neq 4, m+1$, we factor out $q^2$ and shift the index of summation. Thus, \eqref{neg} equals 

\begin{align}& \nonumber -(q^{2m+6}+q^{2m+9}) \mathop{\prod_{\ell>4}}_{\ell\neq m} (1+q^{2\ell-1})\\ \nonumber  & -(q^{2m+1}+q^{2m+4})\mathop{\sum_{k\geq 4}}_{k\neq m-1, m}q^{2k-1} \mathop{\prod_{\ell>k+1}}_{\ell\neq m} (1+q^{2\ell-1})\\ \nonumber & -(q^{4m} \mathop{\prod_{\ell>m+1}} (1+q^{2\ell-1})+ q^{4m+3}\mathop{\prod_{\ell>m+1}} (1+q^{2\ell-1})).
\end{align}
Writing $q^{4m}=q^{2m+3}\cdot q^{2m-3}$ and $q^{4m+3}=q^{2m+6}\cdot q^{2m-3}$,  we see that each term in \eqref{neg} cancels with a corresponding positive term in \eqref{pos1} or \eqref{pos2} (and terms in \eqref{pos1} and \eqref{pos2} are used at most once in this cancellation). Hence,  $g_m(q)\succeq 0$. 
\ \\ {\bf{Case $m=1.$}}
Using \eqref{eq_producttoSum}, we rewrite the left hand side of \eqref{eqn_ms} as \begin{align*}(q^4+q-1)&q \prod_{\ell\geq 2}(1+q^{2\ell-1}) \\ & \hspace{.1in} =q^5+q^2-q+(q^5+q^2-q)\sum_{k\geq 2}q^{2k-1}\prod_{\ell>k}(1+q^{2\ell-1})\\ & \hspace{.1in} = -q+q^2 -q^4+2q^5-q^6 + g_1(q),\end{align*} where 
\begin{align*} g_1(q) = & q^5\sum_{k\geq 2} q^{2k-1}\prod_{\ell>k}(1+q^{2\ell-1})  + q^5\sum_{k\geq 3} q^{2k-1}\prod_{\ell>k}(1+q^{2\ell-1}) \\ +&   q^2\sum_{k\geq 3} q^{2k-1}\prod_{\ell>k}(1+q^{2\ell-1})   - q^4\sum_{k\geq 3} q^{2k-1}\prod_{\ell>k}(1+q^{2\ell-1})\\   -&  q^6 \sum_{k\geq 4} q^{2k-1}\prod_{\ell>k}(1+q^{2\ell-1})  - q\sum_{k\geq 4} q^{2k-1}\prod_{\ell>k}(1+q^{2\ell-1})\\  =: &\,  A_1+A_2+A_3-A_4-A_5-A_6. \end{align*} From this expression, it is clear that $g_1(q)= O(q^7)$.  To show that $g_1(q)\succeq 0$, we first compute $A_1-A_6$.  This equals 
\begin{align*} & q^5\sum_{k\geq 2} q^{2k-1}\prod_{\ell>k}(1+q^{2\ell-1})-q^5\sum_{k\geq 2} q^{2k-1}\prod_{\ell>k+2}(1+q^{2\ell-1})\\
= & \sum_{k\geq 2} (q^{4k+5}+ q^{4k+7}+q^{6k+8})\prod_{\ell>k+2}(1+q^{2\ell-1})=:B_1+ B_2+B_3.
\end{align*}

Next, separating terms by $k$ even and odd respectively, we rewrite $A_5$ as 
$$\sum_{j\geq 2} q^{4j+5}\prod_{\ell>2j}(1+q^{2\ell-1}) + \sum_{j\geq 3} q^{4j+7}\prod_{\ell>2j+1}(1+q^{2\ell-1}).$$ Since $k+2\leq 2k$ if $k\geq 2$, we have $B_1+B_2-A_5\succeq 0$. From \eqref{prelim-2} with $a=2, i=3$, it follows that $A_3-A_4\succeq 0$.  Hence, $g_1(q)\succeq 0$.
\ \\ {\bf{Case $m=2.$}} 
Using \eqref{eq_producttoSum}, we rewrite the left hand side of \eqref{eqn_ms} as $$(q^4+q-1)q^3(1+q) \prod_{\ell\geq 3}(1+q^{2\ell-1})=  -q^3+g_2(q),$$ where $$g_2(q)=(q^5+q^7+q^8)\prod_{\ell\geq 3}(1+q^{2\ell-1})-q^3\sum_{k\geq 3}q^{2k-1} \prod_{\ell>k}(1+q^{2\ell-1})= O(q^5).$$
To show $g_2(q) \succeq 0$, we write \begin{align*} & q^3\sum_{k\geq 3}  q^{2k-1}  \prod_{\ell>k}(1+q^{2\ell-1}) =  q^8  \prod_{\ell>3}(1+q^{2\ell-1})+  q^3  \sum_{k\geq 4}q^{2k-1}\prod_{\ell> k}(1+q^{2\ell-1})\end{align*}
Using \eqref{eq_producttoSum} and \eqref{prelim-1} with $a=3, j=1, i=3$, it follows that $g_2(q) \succeq 0$. 
\ \\ {\bf{Case $m=3.$}} 
Using \eqref{eq_producttoSum}, we rewrite the left hand side of \eqref{eqn_ms} as \begin{align*} (q^4+q-1)&q^5(1+q)(1+q^3) \prod_{\ell\geq 4}(1+q^{2\ell-1}) \\ & = -q^5+q^7-q^8+q^9+2q^{10}+q^{13}-q^{15}+g_3(q),\end{align*} where \begin{align*}g_3(q) =&  q^{12}+q^{15}\\ & +  (-q^5+q^7-q^8+q^9+2q^{10}+q^{12}+q^{13})\sum_{k\geq 4}q^{2k-1}\prod_{\ell> k}(1+q^{2\ell-1})\\  = & (q^9+2q^{10}+q^{12}+q^{13})\sum_{k\geq 4}q^{2k-1}\prod_{\ell> k}(1+q^{2\ell-1})\\ & +(q^{7}+q^{14})\sum_{k\geq 5}q^{2k-1}\prod_{\ell> k}(1+q^{2\ell-1})\\ & -(q^8+q^{15}+ q^{12})\sum_{k\geq 5}q^{2k-1}\prod_{\ell> k}(1+q^{2\ell-1})\\ & -(q^5+q^{14})\sum_{k\geq 6}q^{2k-1}\prod_{\ell> k}(1+q^{2\ell-1})= O(q^{16}).\end{align*}
 Using \eqref{prelim-1} with $a=8, j=1, i=4$ and also with $a=5, j=1, i=5$,  as well as   \eqref{prelim-2} with $a=13, i=5$, we obtain $g_3(q)\succeq 0$.
\end{proof}

\section*{Acknowledgements}  The authors thank the Banff International Research Station (BIRS) and the Women in Numbers 5 (WIN5) Program. The third author is  partially supported  National Science Foundation Grants DMS-1901791 and DMS-2200728. The fifth author 
is partially supported by a FRQNT scholarship by Fonds de Recherche du Qu\'ebec, and an ISM scholarship by Institut des Sciences Math\'ematiques.

\begin{thebibliography}{00}

\bibitem{A66} G.\ E.\ Andrews, \textit{On basic hypergeometric series, mock theta functions, and partitions, II,}  Quart. J. Math. 17
(1966), 132–143.

\bibitem{AndrewsEncy} \bysame, \textit{The Theory of Partitions,} Reprint of the 1976 original. Cambridge Mathematical Library. Cambridge University Press, Cambridge, 1998. 255 pp. 
 
\bibitem{A17}	
	\bysame, 
 \textit{Euler's partition identity and two problems of George Beck}, Math. Student 86 (2017), no. 1-2, 115--119.
 
 \bibitem{A18} \bysame, \textit{Integer partitions with even parts below odd parts and the mock theta functions}, Ann. Comb., 22:3 (2018), 433--445.
 
\bibitem{AB19} G.\ E.\ Andrews and C.\ Ballantine, \textit{Almost partition identities}, Proc. Natl. Acad. Sci. USA 116 (2019), no. 12, 5428--5436

\bibitem{ADY}  G.\ E.\ Andrews, A.\ Dixit, and A.\ J.\  Yee, \emph{Partitions associated with the Ramanujan/Watson mock theta functions $\omega(q), \nu(q),$ and $\phi(q)$,} Res.\ Number Theory 1:19 (2015), 25pp.

\bibitem{AY} G.\ E.\ Andrews, A.\ J.\ Yee, \emph{Some identities associated with mock theta functions $\omega(q)$ and $\nu(q)$,} Ramanujan J. 48 (2019), 613--622.

 \bibitem{ADH} G.\ E.\ Andrews, F.\ Dyson, and D.\ Hickerson, \emph{Partitions and indefinite quadratic
forms,} Invent. Math. 91 (1988), 391--407.

\bibitem{Apostol} T.M. Apostol, \textit{Introduction to Analytic Number Theory,} Undergraduate Texts in Mathematics, Springer (1991).

\bibitem{BB19}
C.\ Ballantine and R.\ Bielak, 
\textit{Combinatorial proofs of two Euler-type identities due to Andrews}. Ann. Comb. 23 (2019), no. 3-4, 511–525.

\bibitem{WIN5Lehmer}
C.\ Ballantine, H.\ Burson, A.\ Folsom, C-Y Hsu, I.\ Negrini, B.\ Wen, \emph{On a partition identity of Lehmer},  Discrete Math.\ 345 10 (2022),  112979.

\bibitem{BW21}
C.\ Ballantine and A.\ Welch, \textit{Beck-type companion identities for Franklin's identity via a modular refinement,} Discrete Math.\  344 8 (2021), 112480.

\bibitem{BG02} A.\ Berkovich and F.\ G.\ Garvan, \textit{Some observations on Dyson's new symmetries of partitions},
J. Combin.
Theory, Ser. A
100
(2002), 61--93.

\bibitem{BDG} B.\ C.\ Berndt, A.\ Dixit, and R.\ Gupta, \emph{Generalizations of the Andrews-Yee identities associated with the mock theta functions $\omega(q)$ and $\nu(q)$,} J.\ Algebraic Combin.\ 55 (2022), no.  4, 1031--1062.

\bibitem{BerndtRankin} B.\ C.\ Berndt and R.\ A.\ Rankin, \emph{Ramanujan. Letters and commentary,} History of Mathematics, 9. American Mathematical Society, Providence, RI; London Mathematical Society, London, 1995. 347 pp

\bibitem{BerndtYee} B.\ C.\ Berndt and A.\ J.\ Yee, \emph{Combinatorial Proofs of Identities in Ramanujan’s Lost
Notebook Associated with the Rogers-Fine Identity and False Theta Functions,} Ann. Comb.\ 7 (2003), 409--423. 

\bibitem{BFOR} K.\ Bringmann, A.\ Folsom, K.\ Ono, and L.\ Rolen, \emph{Harmonic Maass forms and mock modular forms: theory and applications.}
American Mathematical Society Colloquium Publications, 64. American Mathematical Society, Providence, RI, 2017. 391 pp.

 \bibitem{Cohen} H.\ Cohen, \emph{$q$-identities for Maass waveforms,} Invent. Math. 91 (1988), 409--422.

\bibitem{DukeMock} W.\ D.\ Duke, \emph{Almost a century of answering the question: What is a mock theta function?,} Notices Amer. Math. Soc. 61 (2014), no. 11, 1314--1320. 

\bibitem{Fine} N.\ J.\ Fine, \emph{Basic Hypergeometric Series and Applicaitons,} Mathematical Surveys and Monographs, Vol.\ 27, American Mathematical Society, Providence, R.I., 1988.

\bibitem{FPerspectives} A.\ Folsom, \emph{Perspectives on mock modular forms,} 
 J. Number Theory 176 (2017), 500--540. 
 
 \bibitem{Garthwaite} S.\ Garthwaite, \emph{The coefficients of the $\omega(q)$ mock theta function,} Int.\ J.\ Number Theory Vol. 04, No. 06, pp. 1027--1042 (2008).
 
 \bibitem{GasperRahman} G.\ Gasper and M. Rahman, \emph{Basic hypergeometric series,}   Second edition. Encyclopedia of Mathematics and its Applications, 96. Cambridge University Press, Cambridge, 2004.  
 
 \bibitem{GRO} M.\ Griffin, L.\ Rolen, and K.\ Ono, \emph{Ramanujan's mock theta functions,}
Proc.\ Nat.\ Acad.\ Sci., 110, no. 15, (2013) 5765--5768.
 
   \bibitem{LW20} R.\ Li and A.\ Y.\ Z.\ Wang, \textit{Partitions associated with two fifth-order mock theta functions and Beck-type identities}, Int. J. Number Theory (2020), no. 4, 841--855.
\bibitem{LY19} F.\ Z.\ K.\ Li and J.\ Y.\ X.\ Yang, \textit{Combinatorial proofs for identities related to generalizations of the mock theta functions $\omega (q) $ and $\nu (q)$}, The Ramanujan Journal 50, no. 3 (2019), 527-550.

\bibitem{Rad} H.\ Rademacher, \emph{Topics in analytic number theory,} Edited by E. Grosswald, J.\ Lehner and M.\ Newman. Die Grundlehren der mathematischen Wissenschaften, Band 169. Springer-Verlag, New York-Heidelberg, 1973. 320 pp.

\bibitem{Rhoadesmock} R.\ C.\ Rhoades, \emph{On Ramanujan’s definition of mock theta function,} 
Proc.\ Nat.\ Acad.\ Sci., 110, no. 19, (2013) 7592-7594. 

\bibitem{Yang19}
J.\ Y.\ X.\ Yang, 
\textit{Combinatorial proofs and generalizations of conjectures related to Euler’s partition theorem}. European J. Combin. 76 (2019), 62--72.

\bibitem{ZagierIntro} D.\ Zagier, \emph{Introduction to Modular Forms,} In: Waldschmidt M., Moussa P., Luck JM., Itzykson C. (eds) From Number Theory to Physics. Springer, Berlin, Heidelberg, 1992. 238--291. 

\bibitem{ZagierBBQ} \bysame, \emph{Ramanujan's mock theta functions and their applications, 
[d'apr\`es Zwegers and Bringmann-Ono],} S\'eminaire Bourbaki 60\`eme ann\'ee, 2006-2007, Expos\'es 982-996,  Astérisque no. 326 (2009), 22pp.

\bibitem{Zwegers} S.\ Zwegers, \emph{Mock Theta Functions,} Ph.D. Thesis,
Utrecht University (2002).
\end{thebibliography}
\end{document}